\numberwithin{equation}{section}
\theoremstyle{plain}
\newtheorem{theorem}{Theorem}[section]
\newtheorem{lemma}[theorem]{Lemma}
\newtheorem{corollary}[theorem]{Corollary}
\theoremstyle{definition}
\newtheorem{definition}[theorem]{Definition}
\newtheorem{remark}[theorem]{Remark}
\title[Optimal regularity]{Optimal regularity for degenerate parabolic equations on a flat boundary}
\author{Hyungsung Yun}
\address{School of Mathematics, Korea Institute for Advanced Study, Seoul 02455, Republic of Korea}
\email{hyungsung@kias.re.kr}
\subjclass[2020]{Primary 35B65; Secondary 35D40, 35K65}
\keywords{Optimal regularity, Viscosity solution, Degenerate equation, Porous medium equation}
\thanks{Hyungsung Yun has been supported by the KIAS Individual Grant (No. MG097801) at Korea Institute for Advanced Study.}
\begin{document}
\begin{abstract}
	We establish the optimal regularity of viscosity solutions to
	\begin{equation*}
		u_t - x_n^\gamma \Delta u = f,
	\end{equation*}
which arises in the regularity theory for the porous medium equation. Specifically, we prove that under the zero Dirichlet boundary condition on $\{x_n=0\}$, the optimal regularity of $u$ up to the flat boundary $\{x_n=0\}$ is $C^{1,1-\gamma}$. 
Moreover, for the homogeneous equations, we establish that the optimal regularity of $u$ is $C^{2,1-\gamma}$ in the spatial variables, and that $x_n^{-\gamma}u$ is smooth in the variables $x'$ and $t$.
\end{abstract}

\maketitle

%
%
\section{Introduction}
The main goal of this paper is to derive the optimal regularity of viscosity solutions to the following degenerate parabolic equations:
\begin{equation} \label{eq:main}
	u_t - x_n^\gamma \Delta u = f ,
\end{equation}
where $0<\gamma<1$ and $f$ is bounded and continuous.

Equation \eqref{eq:main} is closely related to the following Cauchy--Dirichlet problem for the porous medium equation:
\begin{equation} \label{eq:PME}
\left\{\begin{aligned}
	v_t - \Delta v^m &= 0&& \text{in } \Omega \times (0,\infty) \\
	v&=0 && \text{on } \partial\Omega \times  (0,\infty) \\
	v&=v_0 && \text{on } \Omega \times \{t=0\},
\end{aligned}\right.
\end{equation}
where $m>1$, $v_0>0$ in $\Omega$, and $\Omega \subset \mathbb{R}^n$ is a bounded domain with a smooth boundary. By applying appropriate transformations such as the hodograph transform, the change of variables that flattens $\partial \Omega$, and linearization, the problem \eqref{eq:PME} is transformed into the following Dirichlet problem:
\begin{equation}  \label{prob:zero}
\left\{\begin{aligned}
	u_t - x_n^\gamma \Delta u &= f && \text{in } Q_1^+ \coloneqq Q_1 \cap \{x_n>0\}\\
	u&=0 && \text{on } S_1 \coloneqq Q_1 \cap \{x_n=0\},
\end{aligned}\right.
\end{equation}
where $\gamma =1-1/m \in (0,1)$. The regularity results for the Dirichlet problem \eqref{prob:zero} can be applied to the Cauchy--Dirichlet problem \eqref{eq:PME} for the porous medium equation in bounded domains, we refer to \cite{KL09, KLY25}. For the unbounded domain, specifically $\Omega=\mathbb{R}^2$, we refer to \cite{DH98}. 
\subsection{Main results}
The global $C^{1,\alpha}$-regularity of viscosity solutions to \eqref{eq:main} was recently established by Lee and Yun \cite[Corollary 1.3]{LY25}. Here,  the exact value of $\alpha$ is unknown, but it cannot exceed $1-\gamma$.
To be precise, a direct computation shows that for any linear function $l(x)$,
\begin{equation*}	
	\varphi(x,t) = l(x) + 2t  + \frac{x_n^{2-\gamma}}{(2-\gamma)(1-\gamma)}
\end{equation*}
is a solution to  
\begin{equation*}
	u_t -x_n^\gamma \Delta u = 1.
\end{equation*}
Therefore, solutions to \eqref{eq:main} cannot be expected to be more regular than $C^{1,1-\gamma}$ up to the flat boundary $S_1$. The essence of our main results is that, under sufficiently smooth boundary data, the simple solution $\varphi$ actually represents a worst-case scenario in terms of boundary regularity.

The boundary regularity of solutions to \eqref{eq:main} is influenced by factors such as the geometry of the boundary and the Dirichlet boundary condition. Since the flat boundary $S_1$ is smooth, it does not present a significant obstacle to attaining optimal regularity of solutions to \eqref{eq:main}. However, the boundary regularity of solutions cannot exceed that of the boundary data, making the boundary condition a critical factor in determining the optimal regularity.

Our first main result is as follows. 
\begin{theorem} \label{thm:main}
	Suppose that $f \in C(Q_1^+) \cap L^\infty(Q_1^+)$ and $g \in C^{1,\beta}(\overline{S_1})$ for some $\beta \in (0,1)$. Let $u$ be a viscosity solution to 
\begin{equation}   \label{eq:prob}
\left\{\begin{aligned}
	u_t - x_n^\gamma \Delta u &= f && \text{in } Q_1^+ \\
	u&=g && \text{on } S_1.
\end{aligned}\right.
\end{equation}
Then $u \in C^{1,\bar\alpha}(\overline{Q_{1/2}^+})$ with the uniform estimate
\begin{equation*} 
	\|u\|_{C^{1,\bar\alpha}(\overline{Q_{1/2}^+})} \leq C (\|u\|_{L^{\infty}(Q_1^+)} + \|f\|_{L^{\infty}(Q_1^+)} + \|g\|_{C^{1,\beta}(\overline{S_1})}),
\end{equation*}
where $\bar\alpha= \min\{\beta, 1-\gamma\}$ and $C>0$ is a constant depending only on $n$, $\beta$ and $\gamma$.
\end{theorem}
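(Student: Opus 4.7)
The plan is to establish a pointwise $C^{1,\bar\alpha}$ expansion at each boundary point $z_0 \in \overline{S_{1/2}}$ and then combine it with the interior estimate from \cite[Corollary 1.3]{LY25} to produce the global norm. After translating $z_0$ to the origin, subtracting an affine function matching $g$ at $0$ up to first order, and normalizing so that $\|u\|_\infty$, $\|f\|_\infty$, and $\|g\|_{C^{1,\beta}}$ are all small, the target reduces to producing an affine function $L(x',t)$ with coefficients controlled in terms of $n,\gamma,\beta$ such that $\sup_{Q_\rho^+}|u-L| \le C\rho^{1+\bar\alpha}$ for all small $\rho$, where $Q_\rho^+$ denotes the half-cylinder adapted to the natural scaling $t \sim r^{2-\gamma}$ of \eqref{eq:main}.

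The central ingredient is the following improvement of flatness, which I would prove by compactness: there exist $r,\delta \in (0,1)$ depending only on $n,\gamma,\beta$ such that whenever $u$ solves \eqref{eq:prob} with $\|u\|_\infty \le 1$, $\|f\|_\infty \le \delta$, and $\|g\|_{C^{1,\beta}} \le \delta$, there is an affine $L$ with bounded coefficients satisfying $\sup_{Q_r^+}|u-L| \le r^{1+\bar\alpha}$. Assuming failure, a counterexample sequence $(u_k,f_k,g_k)$ has $f_k \to 0$ uniformly and $g_k \to L_\infty$ in $C^{1,\beta}$ for some affine $L_\infty$. The uniform $C^{1,\alpha}$ bound from \cite[Corollary 1.3]{LY25} together with Arzel\`a--Ascoli extracts a subsequential limit $u_\infty$ solving $u_t - x_n^\gamma \Delta u = 0$ in $Q_1^+$ with $u_\infty = L_\infty$ on $S_1$. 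Writing $v = u_\infty - L_\infty$, the function $v$ is a homogeneous solution with zero Dirichlet data; the flat-boundary $C^{1,1-\gamma}$ expansion for this problem produces an affine approximation with error $O(r^{1+(1-\gamma)})$, while the Taylor approximation of $g_k$ contributes $O(r^{1+\beta})$. Together these defeat $r^{1+\bar\alpha}/2$ for $r$ small, contradicting the counterexample hypothesis.

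To iterate, rescale via $\tilde u(x,t) = r^{-k(1+\bar\alpha)}\bigl[u(r^k x,r^{k(2-\gamma)}t) - L_k(r^k x,r^{k(2-\gamma)}t)\bigr]$; this preserves the form of the equation, rescales the source by a factor $r^{k(1-\gamma-\bar\alpha)} \le 1$ (using $\bar\alpha \le 1-\gamma$), and rescales the $C^{1,\beta}$ seminorm of the boundary excess by $r^{k(\beta-\bar\alpha)} \le 1$ (using $\bar\alpha \le \beta$), so the hypotheses of the improvement of flatness are preserved at every stage. Telescoping the resulting affine approximations $L_k$ yields the pointwise $C^{1,\bar\alpha}$ expansion at each $z_0 \in \overline{S_{1/2}}$. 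Combining these boundary expansions with the interior $C^{1,\alpha}$ estimate (enhanced to the proper exponent on scales controlled by the distance to $S_1$, via standard uniformly parabolic Schauder theory on $\{x_n \ge c\}$) through a Campanato-type argument produces the uniform $C^{1,\bar\alpha}(\overline{Q_{1/2}^+})$ bound claimed in the theorem.

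The principal technical obstacle is the flat-boundary $C^{1,1-\gamma}$ expansion for the homogeneous equation invoked in the compactness step; this is the flat-boundary form of the paper's second main theorem. I would establish it via barriers modeled on the explicit supersolution $x_n^{2-\gamma}/((2-\gamma)(1-\gamma))$ highlighted in the introduction, coupled with a tangential bootstrap exploiting that $x_n^{-\gamma}u$ is smooth in $(x',t)$. Once this expansion is in hand, the iteration scheme above is routine.
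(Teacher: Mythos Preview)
Your approach is the paper's: compactness for an improvement-of-flatness lemma, iteration under the scaling $(x,t)\mapsto(rx,r^{2-\gamma}t)$, then gluing the boundary pointwise expansion with interior estimates. The one place you diverge is where you identify ``the principal technical obstacle'' as a flat-boundary $C^{1,1-\gamma}$ expansion for the homogeneous problem and propose proving it via barriers and a tangential bootstrap. That obstacle does not exist: for the limiting solution $u^\infty$ of \eqref{eq:f=g=0} the paper simply invokes the boundary $C^{2,\alpha}$ estimate already established in \cite[Lemma~4.1]{LY25} (restated here as \Cref{thm:LY25_c2a}), which yields $|u^\infty(x,t)-a_0x_n|\le 2C_1|x|\,x_n$ on $Q_{1/2}^+$ for some $|a_0|\le C_1$. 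This is a rate of order $2$, strictly better than $2-\gamma$, so choosing $\eta<(4C_1)^{-1/\gamma}$ gives $\|u^\infty-a_0x_n\|_{L^\infty(Q_\eta^+)}<\tfrac12\eta^{2-\gamma}$ and closes the contradiction. Your barrier-plus-bootstrap route would also work, but it is unnecessary, and your appeal to the paper's second main theorem---while not literally circular, since \Cref{thm:main2} is proved independently of \Cref{thm:main}---is overkill.

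Two smaller points. In your compactness setup $\|g_k\|_{C^{1,\beta}}\le 1/k$, so $g_k\to 0$ and your $L_\infty$ is just $0$; the paper arranges this cleanly by subtracting the affine part of $g$ \emph{before} normalizing, so that the iteration (\Cref{lem:next_step}) runs against boundary data satisfying only the decay condition \eqref{c1b_g}. And for the interior piece, since $f$ is merely bounded and continuous, Schauder theory does not apply directly; the paper uses the uniformly parabolic $W^{2,p}$ estimate plus Sobolev embedding (\Cref{lem:int_C11-}) to obtain $C^{1,1-n/p}$ for every $p>n$, then glues with the boundary expansion via the projection $(\bar x_0,t_0)=((x_0',0),t_0)$.
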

\begin{remark}
If the Dirichlet boundary data $g$ is sufficiently smooth, for example, $g\in C^{1,1-\gamma}(\overline{S_1})$, then the optimal regularity of viscosity solutions to \eqref{eq:prob} is $C^{1,1-\gamma}$.
\end{remark}
\begin{corollary}
Suppose $f \in C(Q_1^+) \cap L^\infty(Q_1^+)$ and let $u$ be a viscosity solution to \eqref{prob:zero}. Then 
$u \in C^{1,1-\gamma}(\overline{Q_{1/2}^+})$ with the uniform estimate
\begin{equation*} 
	\|u\|_{C^{1,1-\gamma}(\overline{Q_{1/2}^+})} \leq C (\|u\|_{L^{\infty}(Q_1^+)} + \|f\|_{L^{\infty}(Q_1^+)}),
\end{equation*}
where $C>0$ is a constant depending only on $n$ and $\gamma$.
\end{corollary}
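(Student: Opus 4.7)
The plan is to derive the corollary as a direct application of Theorem \ref{thm:main} to the special case of vanishing boundary data. Given problem \eqref{prob:zero}, the boundary condition is $g\equiv 0$ on $S_1$, which trivially belongs to $C^{1,\beta}(\overline{S_1})$ for every $\beta\in(0,1)$ with $\|g\|_{C^{1,\beta}(\overline{S_1})}=0$. Since $\gamma\in(0,1)$ guarantees $1-\gamma\in(0,1)$, I would choose $\beta=1-\gamma$ (any $\beta\in[1-\gamma,1)$ works equally well), which forces the exponent in Theorem \ref{thm:main} to become $\bar\alpha=\min\{\beta,1-\gamma\}=1-\gamma$.

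Substituting this choice into the conclusion of Theorem \ref{thm:main} and discarding the vanishing boundary term yields
\begin{equation*}
    \|u\|_{C^{1,1-\gamma}(\overline{Q_{1/2}^+})} \leq C\bigl(\|u\|_{L^\infty(Q_1^+)} + \|f\|_{L^\infty(Q_1^+)}\bigr).
\end{equation*}
The constant $C$ supplied by Theorem \ref{thm:main} a priori depends on $n$, $\beta$, and $\gamma$; however, since $\beta$ has been pinned to the value $1-\gamma$, the dependence collapses to $n$ and $\gamma$ alone, matching the statement of the corollary.

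There is no genuine obstacle in this argument, as all of the analytic content already resides in Theorem \ref{thm:main}. The only point demanding any care is verifying that the choice $\beta=1-\gamma$ is admissible in the hypothesis of the theorem, i.e., that it lies strictly inside $(0,1)$; this is ensured exactly by the standing assumption $\gamma\in(0,1)$ on the degeneracy exponent in \eqref{eq:main}.
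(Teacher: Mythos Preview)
Your argument is correct and is exactly the intended derivation: the paper states the corollary immediately after Theorem~\ref{thm:main} without a separate proof, since it follows by taking $g\equiv 0$ and $\beta=1-\gamma$ just as you describe.
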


In the case of homogeneous equations, higher regularity can be expected. The global $C^{2,\alpha}$-regularity of viscosity solutions to 
\begin{equation} \label{eq:main_hom}
	u_t - x_n^\gamma \Delta u = 0 \quad \text{in } Q_1^+
\end{equation}
was recently proved by Lee and Yun \cite[Theorem 1.7]{LY25}. A direct calculation shows that, for any linear function $l(x)$,
\begin{equation*}	
	\psi(x,t) = l(x) + tx_n  + \frac{x_n^{3-\gamma}}{(3-\gamma)(2-\gamma)} 
\end{equation*}
is a solution to \eqref{eq:main_hom}. Therefore, solutions to \eqref{eq:main_hom} cannot be expected to possess more than $C^{2,1-\gamma}$ up to the flat boundary $\{x_n = 0\}$.
\begin{remark} \label{rmk_ut_bdd}
Let $u\in C^{2}(\overline{Q_1^+})$ be a solution to \eqref{eq:main_hom}  with boundary data $u=g$ on $S_1$. Note that
	\begin{equation*}
		|u_t(x,t)| \leq x_n^\gamma |\Delta u(x,t)| \leq \|u\|_{C^{2}(\overline{Q_1^+})} x_n^\gamma \quad \text{for all } (x,t) \in \overline{Q_1^+},
	\end{equation*}
	which implies that 
	\begin{equation} \label{cond_gt}
		g_t = 0 \quad \text{on } S_1.
	\end{equation}
Hence, condition \eqref{cond_gt} is a sufficient condition for the existence of a $C^2$-solution.
\end{remark}

Our second main result is as follows. 
\begin{theorem} \label{thm:main2}
	Suppose that $\beta  \in (0,1)$ and $g \in C^{2,\beta}(\overline{S_1})$ satisfies \eqref{cond_gt}. Let $u$ be a viscosity solution to 
\begin{equation}   \label{prob_xf}
\left\{\begin{aligned}
	u_t - x_n^\gamma \Delta u &= 0&& \text{in } Q_1^+ \\
	u&=g && \text{on } S_1.
\end{aligned}\right.
\end{equation}
Then
\begin{equation*}
	u(\cdot,t) \in C^{2,\bar\alpha}(\overline{B_{1/2}^+}) \quad \text{for all }t\in [-1/4,0] \qquad\text{and} \qquad
	x_n^{-\gamma} u_t \in C^{1-\gamma}(\overline{Q_{1/2}^+})
\end{equation*}
with the uniform estimates
\begin{equation*} 
	\|u(\cdot,t)\|_{C^{2,\bar\alpha}(\overline{B_{1/2}^+})} \leq C (\|u\|_{L^{\infty}(Q_1^+)} + \|g\|_{C^{2,\beta}(\overline{S_1})}) 
\end{equation*}
and 
\begin{equation*}
	\|x_n^{-\gamma}u_t\|_{C^{1-\gamma}(\overline{Q_{1/2}^+})} \leq C( \|u\|_{L^{\infty}(Q_1^+)} + \|g\|_{C^{2,\beta}(\overline{S_1})}) ,
\end{equation*}
where $\bar\alpha= \min\{\beta, 1-\gamma\}$ and $C>0$ is a constant depending only on $n$, $\beta$ and $\gamma$.
\end{theorem}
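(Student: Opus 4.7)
The plan is to extend the approach of \Cref{thm:main} one order higher: at each boundary point, I approximate $u$ by polynomials modelled on $\psi$ rather than on the $\varphi$ that governs the $C^{1,1-\gamma}$ case. The natural class of admissible polynomials is
\begin{equation*}
	P(x,t) = a + b \cdot x + \tfrac{1}{2} x^{T} A\, x + d\Bigl( t x_n + \tfrac{x_n^{3-\gamma}}{(3-\gamma)(2-\gamma)} \Bigr),
\end{equation*}
with $A$ symmetric and $\operatorname{tr} A = 0$. A direct computation shows these are exactly the polynomials of this shape annihilated by $\partial_t - x_n^{\gamma}\Delta$, and the trace constraint is equivalent to $\Delta P \equiv 0$ on $\{x_n = 0\}$, matching the structural condition $\Delta u|_{S_1} = 0$ forced by $u_t|_{S_1} = g_t = 0$ together with the equation.

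\textbf{Approximation lemma and iteration.} The heart of the proof is to show that, for each $z_0 \in S_1 \cap Q_{1/2}$, there is an admissible $P_{z_0}$ with coefficients bounded by $C(\|u\|_{L^{\infty}(Q_1^+)} + \|g\|_{C^{2,\beta}(\overline{S_1})})$ such that
\begin{equation*}
	\sup_{Q_r(z_0) \cap Q_1^+} |u - P_{z_0}| \leq C\, r^{2+\bar\alpha}\quad\text{for all } 0 < r \leq 1/2.
\end{equation*}
I would prove this by dyadic iteration using the natural parabolic scaling $(x,t) \mapsto (\lambda x, \lambda^{2-\gamma} t)$ that preserves the operator: given an admissible $P_k$ at scale $r_k = 2^{-k}$, the rescaling $w_k(x,t) = (u - P_k)(r_k x, r_k^{2-\gamma} t)/r_k^{2+\bar\alpha}$ solves the homogeneous equation with boundary trace controlled by $\|g\|_{C^{2,\beta}(\overline{S_1})}$, the compatibility $g_t = 0$ entering crucially for the Taylor expansion in the $t$-direction. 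A contradiction-and-compactness argument, fed by the global $C^{2,\alpha}$ estimate of Lee--Yun \cite[Theorem 1.7]{LY25}, produces a limit $w_\infty$ on $\mathbb{R}^n_+ \times (-\infty,0]$ solving the homogeneous equation with vanishing trace and polynomial growth $|w_\infty(x,t)| \leq C(|x|^{2+\bar\alpha} + (-t)^{(2+\bar\alpha)/(2-\gamma)})$. The main obstacle is then a Liouville-type classification showing that such a $w_\infty$ must be an admissible polynomial; this contradicts the failure of the scheme and closes the induction.

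\textbf{From the lemma to both conclusions.} Fixing $t_0 \in [-1/4,0]$, the restriction $P_{(x_0', 0, t_0)}(\cdot, t_0)$ is an ordinary quadratic in $x$, so the boundary approximation combined with the interior $C^{2,\alpha}$ estimate of \cite{LY25} yields $u(\cdot,t_0) \in C^{2,\bar\alpha}(\overline{B_{1/2}^+})$ with the stated bound via standard patching. For the $x_n^{-\gamma} u_t$ statement, the equation rewrites as $x_n^{-\gamma} u_t = \Delta u$ in $Q_1^+$, and the identity $\Delta P_{z_0} \equiv 0$ on $\{x_n=0\}$ together with the admissible-polynomial approximation gives the leading behaviour $\Delta u(x,t) \approx d(x_0', t_0)\, x_n^{1-\gamma}$ near $S_1$. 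To control $d$ jointly in the tangential variables $(x',t)$, I would differentiate the equation formally -- justified by a routine approximation argument -- to see that $u_t$ solves $(u_t)_t - x_n^\gamma \Delta u_t = 0$ in $Q_1^+$ with $u_t|_{S_1} = g_t = 0$, apply \Cref{thm:main} to $u_t$ to deduce $u_t \in C^{1, 1-\gamma}(\overline{Q_{1/2}^+})$ (noting $d = \partial_{x_n} u_t|_{S_1}$), and combine this tangential H\"older control with the explicit $x_n^{1-\gamma}$ structure to conclude $x_n^{-\gamma} u_t \in C^{1-\gamma}(\overline{Q_{1/2}^+})$.
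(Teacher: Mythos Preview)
Your proposal takes a substantially harder route than the paper, and the hardest step---the Liouville classification of entire solutions with growth $|w_\infty|\le C(|x|^{2+\bar\alpha}+(-t)^{(2+\bar\alpha)/(2-\gamma)})$---is left open. You flag it as ``the main obstacle'' but give no argument. Such a result is not available off the shelf for this degenerate operator, and proving it would amount to essentially the same work as the theorem itself (one would need precisely the fractional-expansion machinery of \cite{KLY24,KLY25}). Note also that your ``admissible polynomials'' contain the term $x_n^{3-\gamma}$, which is not polynomial; this is not fatal, but it means your compactness step lives in a fractional-expansion framework rather than ordinary Taylor approximation, and the iteration must be set up accordingly.

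The paper bypasses all of this. Its key observation---which you in fact stumble upon in your last paragraph but use only for the second conclusion---is that once $x_n^{-\gamma}u_t$ is known to be H\"older, the \emph{first} conclusion follows immediately from classical elliptic Schauder: for each fixed $t$, the function $v=u(\cdot,t)$ solves $\Delta v = x_n^{-\gamma}u_t(\cdot,t)\in C^{1-\gamma}(\overline{B_{3/4}^+})$ with $v=g(\cdot,t)\in C^{2,\beta}$ on the flat boundary, hence $v\in C^{2,\bar\alpha}$. So the entire theorem reduces to \Cref{lem:hol_ut/x_g}, whose proof is short: since $g_t=0$, the function $u_t$ solves the homogeneous problem with zero Dirichlet data, the boundary Lipschitz estimate \Cref{thm:LY25_lip} gives $|u_t|\le C x_n$, and a standard two-case argument (points close relative to $y_n$ versus far, using interior estimates in the first case and the pointwise bound $|x_n^{-\gamma}u_t|\le Cx_n^{1-\gamma}$ in the second) yields $x_n^{-\gamma}u_t\in C^{1-\gamma}$. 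No compactness, no Liouville, no fractional polynomials. Your instinct to differentiate in $t$ and exploit $g_t=0$ is exactly right; the missed simplification is that this already delivers the spatial $C^{2,\bar\alpha}$ via the elliptic reduction, making the whole admissible-polynomial scheme unnecessary.
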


By setting $u=m^{1/\gamma}v^m$, the Cauchy--Dirichlet problems \eqref{eq:PME} can be reformulated as follows:\begin{equation}  \label{eq:PME_tr}
\left\{\begin{aligned}
	u_t - u^\gamma \Delta u &= 0&& \text{in } \Omega \times (0,\infty) \\
	u&=0 && \text{on } \partial\Omega \times(0,\infty)  \\
	u&=u_0 && \text{on } \Omega \times \{t=0\},
\end{aligned}\right.
\end{equation}
where $u_0 \coloneqq m^{1/\gamma} v_0^m$. The optimal regularity of solutions to \eqref{eq:PME_tr} was recently established as $C^{2,1-\gamma}$ by Jin, Ros-Oton, and Xiong \cite{JROX24} in the sense described below:
\begin{equation} \label{JROX24_op}
	\left\{\begin{aligned}
		u(\cdot, t) &\in C^{2,1-\gamma}(\overline\Omega) && \text{for all } t > T^*\\
		u(x, \cdot) &\in C^{\infty}(T^*,\infty) && \text{uniformly in } x \in \overline{\Omega},
	\end{aligned}\right.
\end{equation}
where the time $T^*>0$ satisfying
\begin{equation} \label{time_star} 
	u(\cdot,t) \in C^{0,1} (\overline{\Omega}) \quad \text{for all } t > T^*.
\end{equation}

When $\Omega=B_1^+$, the global Lipschitz regularity \eqref{time_star} can be interpreted as
\begin{equation*}
	u \approx x_n \quad \text{near } S_1.
\end{equation*}
Consequently, as in the \cite[Section 4]{Yun24}, the operator in \eqref{eq:main} can be understood as follows:
\begin{equation} \label{approx_op}
	u_t - x_n^\gamma \Delta u \approx u_t - u^\gamma \Delta u  \quad \text{near }S_1.
\end{equation}
Based on the approximation \eqref{approx_op}, it is reasonable to expect that the optimal regularity results in \eqref{JROX24_op} may also hold for \eqref{eq:main}.

The following corollary, in the case $\kappa=0$, is analogous to the optimal regularity results in \cite{JROX24}, and due to the structure of the equation, it indicates that all higher-order tangential derivatives, excluding those in the $x_n$-direction, are smooth.
\begin{corollary} \label{cor:main2}
Let $u$ be a viscosity solution to 
\begin{equation} \label{eq:f=g=0}
\left\{\begin{aligned}
	u_t - x_n^\gamma \Delta u &= 0 && \text{in } Q_1^+ \\
	u&=0 && \text{on } S_1.
\end{aligned}\right.
\end{equation}
Then
\begin{equation*}
	D_{(x',t)}^\kappa u(\cdot, t) \in C^{2,1-\gamma}(\overline{B_{1/2}^+}) \quad \text{for all $\kappa \in \mathbb{N}_0^n$ and $t \in [-1/4,0]$}
\end{equation*}
with the uniform estimate
\begin{equation*} 
	\|D_{(x',t)}^\kappa u(\cdot,t)\|_{C^{2,1-\gamma}(\overline{B_{1/2}^+})} \leq C \|u\|_{L^{\infty}(Q_1^+)},
\end{equation*}
where $C>0$ is a constant depending only on $n$ and $\gamma$.

Moreover,  
\begin{equation*}
	x_n^{-\gamma}D_{(x',t)}^\kappa u \in C^{1-\gamma}(\overline{Q_{1/2}^+}) \quad \text{for all $\kappa \in \mathbb{N}_0^n$}
\end{equation*}
with the uniform estimate
\begin{equation*}
	\|x_n^{-\gamma} D_{(x',t)}^\kappa u\|_{C^{1-\gamma}(\overline{Q_{1/2}^+})} \leq C \|u\|_{L^{\infty}(Q_1^+)} .
\end{equation*}
\end{corollary}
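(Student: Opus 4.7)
The plan is to bootstrap by iterating Theorem \ref{thm:main2}, using the fact that the operator $\partial_t - x_n^\gamma \Delta$ has coefficients depending only on $x_n$ and therefore commutes with every tangential derivative $D_{(x',t)}^\kappa$. Formally, if $u$ solves \eqref{eq:f=g=0}, then $v := D_{(x',t)}^\kappa u$ solves the same homogeneous equation, and its trace on $S_1$ vanishes by differentiating $u|_{S_1}=0$ tangentially in $x'$ and $t$; consequently the compatibility $v_t=0$ on $S_1$ demanded by \eqref{cond_gt} holds automatically with boundary data $g\equiv 0$.

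To make this rigorous I would use tangential finite differences. For $j\in\{1,\dots,n-1\}$ and small $h$, the difference quotient of $u$ in the $x_j$-direction is a viscosity solution of the same equation on a slightly smaller cylinder with zero boundary data on $S_1$; the analogous time difference enjoys the same property, since the equation is autonomous in $t$. Applying Theorem \ref{thm:main2} with $g\equiv 0 \in C^{2,\beta}$ (for any $\beta\in(0,1)$, so that $\bar\alpha=1-\gamma$) yields uniform estimates on these quotients, and letting $h\to 0$ and iterating produces $v=D_{(x',t)}^\kappa u$ satisfying the same problem. Applying Theorem \ref{thm:main2} directly to $v$ then delivers the first estimate $v(\cdot,t)\in C^{2,1-\gamma}(\overline{B_{1/2}^+})$ with the claimed norm bound.

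For the second estimate $x_n^{-\gamma} v\in C^{1-\gamma}(\overline{Q_{1/2}^+})$, I would split into two cases. If the multi-index $\kappa$ has a nonzero $t$-component, let $\kappa'$ be obtained from $\kappa$ by decreasing the $t$-component by one and set $\tilde v:=D_{(x',t)}^{\kappa'}u$; then $v=\tilde v_t$, and the $x_n^{-\gamma}(\cdot)_t\in C^{1-\gamma}$ conclusion of Theorem \ref{thm:main2} applied to $\tilde v$ gives the bound. If $\kappa$ consists only of $x'$-derivatives, I would use the boundary Taylor expansion: since $v(\cdot,t)\in C^{2,1-\gamma}$ uniformly in $t$ with $v|_{x_n=0}=0$, one has
\begin{equation*}
	v(x,t) = a(x',t)\,x_n + b(x',t)\,x_n^2 + R(x,t), \qquad |R(x,t)|\leq C\,x_n^{3-\gamma},
\end{equation*}
with $a=v_{x_n}|_{x_n=0}$ and $b=\tfrac{1}{2}v_{x_n x_n}|_{x_n=0}$. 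Dividing by $x_n^\gamma$ and noting that $x_n^{1-\gamma}$, $x_n^{2-\gamma}$, and $x_n^{3-2\gamma}$ are $C^{1-\gamma}$ functions of $x_n$ reduces matters to controlling $a$ and $b$ in $(x',t)$. Their $x'$-regularity is immediate from the spatial $C^{2,1-\gamma}$ estimate on $v$, while applying the first half of the argument to $v_t$ (which by the commutation principle solves the same zero-boundary-data problem) shows $a_t=(v_t)_{x_n}|_{x_n=0}$ is bounded, whence $a$ is Lipschitz, and \emph{a fortiori} $C^{1-\gamma}$, in $t$; the same reasoning controls $b$.

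The principal obstacle is the rigorous justification of the iterated differentiation scheme: verifying at each stage that difference quotients are viscosity solutions with zero data, and tracking how the constants in Theorem \ref{thm:main2} accumulate under iteration. Once the hierarchy $\{D_{(x',t)}^\kappa u\}$ is set up, the Taylor-expansion step for pure $x'$-derivatives and the direct reduction for derivatives involving $t$ are both short.
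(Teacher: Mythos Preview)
Your core idea---that $v=D_{(x',t)}^\kappa u$ solves \eqref{eq:f=g=0} again, so Theorem~\ref{thm:main2} applies with $g\equiv 0$---is exactly what the paper does, and your first estimate follows immediately from this. The finite-difference justification is also fine, though the paper is content to note interior smoothness ($u\in C^\infty_{\mathrm{loc}}(Q_1^+)$) and invoke a bootstrap of Lemma~\ref{thm:LY25_c2a} to control $\|v\|_{L^\infty}$, so you need not worry much about the constant-tracking.

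The genuine gap is in your treatment of the second estimate for pure $x'$-derivatives. Writing $v=ax_n+bx_n^2+R$ with $|R|\le Cx_n^{3-\gamma}$, you conclude that $x_n^{-\gamma}R$ is $C^{1-\gamma}$ because ``$x_n^{3-2\gamma}$ is $C^{1-\gamma}$.'' But the pointwise bound $|x_n^{-\gamma}R|\le Cx_n^{3-2\gamma}$ does not by itself yield a H\"older seminorm bound: a function satisfying such a decay can still oscillate badly (think of $x_n^{3-2\gamma}\sin(x_n^{-1})$, whose $x_n$-derivative blows up when $\gamma>1/2$). Controlling $x_n^{-\gamma}R$ in $C^{1-\gamma}$ jointly in $(x,t)$ would require additional structure on $R$ that your Taylor expansion does not supply; and if you try to recover it by subtracting $ax_n^{1-\gamma}+bx_n^{2-\gamma}$ from $x_n^{-\gamma}v$, you are back to proving the original claim.

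The paper's route avoids this entirely and treats all $\kappa$ uniformly, without any case split on whether a $t$-derivative is present. Since $v$ solves \eqref{eq:f=g=0}, the boundary Lipschitz estimate (Lemma~\ref{thm:LY25_lip}) gives $|v(x,t)|\le C\|v\|_{L^\infty}\,x_n$ directly---no Taylor expansion needed. From this, $|x_n^{-\gamma}v|\le Cx_n^{1-\gamma}$, and the same near/far two-case argument used for $u_t$ in Lemma~\ref{lem:hol_ut/x_g} (interior H\"older estimates when the points are close relative to their height, the crude pointwise bound otherwise) gives $x_n^{-\gamma}v\in C^{1-\gamma}(\overline{Q_{1/2}^+})$. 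This is Lemma~\ref{lem:hol_ut/x}, and in fact the paper proves it \emph{first} and then deduces the spatial $C^{2,1-\gamma}$ estimate via elliptic Schauder applied to $\Delta v=x_n^{-\gamma}v_t$ on each time slice---so the second conclusion feeds the first, not the other way around.
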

\subsection{Related works}
Equations exhibiting degeneracy on $S_1$ arise in various applied fields, including the porous medium equation, Gauss curvature flow, the Monge--Ampère equation, nonlocal equations, and financial mathematics.  For instance, the porous medium equation in $\mathbb{R}^2 \times (0,\infty)$ can be linearized as a degenerate parabolic equation
\begin{equation}\label{eq:DH98}
	u_t - x_2 (u_{x_1 x_1}+ u_{x_2 x_2})- \nu u_{x_2} =f 
\end{equation}
with $\nu>0$. Using the smoothness of solutions to \eqref{eq:DH98} in weighted H\"older space, Daskalopoulos and Hamilton \cite{DH98} established the smoothness of both the solution and the free boundary of the porous medium equation. 

For the porous medium equation in bounded domains, Kim and Lee \cite{KL09} established regularity results by applying $C_s^{2+\alpha}$-estimates in weighted H\"older spaces for solutions to \eqref{eq:main}. 
The results of \cite{DH98} suggest that solutions to the porous medium equation in bounded domains may also be smooth in weighted H\"older spaces. 
However, there are limitations to analyzing the higher regularity of solutions using these weighted H\"older spaces. This conjecture was completely resolved by Kim, Lee, and Yun \cite{KLY25}. They established regularity results for solutions to the porous medium equation in bounded domains by proving that solutions to \eqref{eq:main} admit infinite order fractional expansions by using the generalized Schauder theory \cite{KLY24} for fractional order expansions.

Research on nonlinear equations using the technique of linearizing them into degenerate equations has been actively pursued over the past two decades. For the Gauss curvature flow, see \cite{DH99, KLR13}; for the Monge--Ampère equation, see \cite{DS09, LS17}; for nonlocal equations, see \cite{CS07}; and for financial mathematics, see \cite{KLY23}.

Alongside the study of nonlinear equations, the regularity theory for equations with degeneracy or singularity on $S_1$ has advanced significantly over the past two decades.  Daskalopoulos and Lee \cite{DL03} established the Alexandroff--Bakelman--Pucci estimates, Harnack inequality, and the H\"older continuity of solutions to
\begin{equation*}
	u_t - a^{11} u_{x_1x_1} - x_2 a^{22} u_{x_2x_2} - 2 \sqrt{x_2}a^{12} u_{x_1 x_2}- b^1  u_{x_1} - b^2  u_{x_2} = f. 
\end{equation*}

Within the framework of regularity theory utilizing weighted H\"older spaces $C_s^{k+\alpha}$ associated with the Riemannian metric $ds$ introduced in \cite{DH98}, Song and Wang \cite{SW12} established $C_s^{2+\alpha}$-estimates of solutions to 
\begin{equation*}
	u_{x_1x_1} + x_2 a  u_{x_2x_2} -b u_{x_2}  = f. 
\end{equation*}
Additionally, Feehan and Pop \cite{FP13, FP14} proved $C_s^{2+\alpha}$-estimates of solutions to 
\begin{equation*}
	u_t - x_n a^{ij} D_{ij} u - b^i  D_i u - c u =f 
\end{equation*}
and a priori $C_s^{k+2+\alpha}$-estimates  of solutions to 
\begin{equation*}
	x_n a^{ij} D_{ij} u + b^i  D_i u - c u =f .
\end{equation*}

In the study of regularity theory based on the weighted Sobolev spaces, Dong and Phan \cite{DP21, DP23} established $W^{1,p}$-estimates of solutions to
\begin{equation*}
	x_n^\gamma(u_t + \lambda u) - D_i [x_n^\gamma (a^{ij} D_j u -F_i)] = \sqrt{\lambda} x_n^\gamma f.
\end{equation*}
Furthermore, Dong, Phan, and Tran \cite{DPT23, DPT24} established $W^{1,p}$-estimates of solutions to
\begin{equation*}
	u_t - x_n^\gamma D_i(a^{ij} D_j u) + \lambda c u =x_n^\gamma  D F + f 
\end{equation*}
and $W^{2,p}$-estimates of solutions to
\begin{equation*}
	u_t - x_n^\gamma \Delta u + \lambda u =x_n^\gamma  f  .
\end{equation*}
In the context of weighted Sobolev spaces with mixed norms, Dong and Ryu \cite{DR24} proved $W^{1,p,q}$-estimates of solutions to 
\begin{equation*}
	a_0u_t -x_n^2 D_i (a^{ij}D_j u) + x_n b^i D_i u + x_n D_i (\tilde{b}^i u) + cu +\lambda c_0u = D_i F_i + f.
\end{equation*}

Kim, Lee, and Yun \cite{KLY24} introduced a new framework for analyzing the regularity of solutions to 
\begin{equation*}
	u_t - \sum_{i,j=1}^{n-1} a^{ij} D_{ij} u - 2 x_n^{\gamma/2} \sum_{i=1}^{n-1} a^{in} D_{in}u - x_n^\gamma a^{nn} D_{nn}u - \sum_{i=1}^{n-1} b^i D_i u - x_n^{\gamma/2} b^n D_n u - cu =f
\end{equation*}
by employing fractional order expansions and developing a generalized Schauder theory to support this approach.

Over the past decade, active research on optimal regularity has employed the compactness argument. For the fully nonlinear equations, we refer to \cite{ART15, AS23}, and for the $p$-Laplace equations, we refer to \cite{ATU17, ATU18}. 

In the case of optimal regularity of solutions to \eqref{eq:PME_tr}, the approach differs from those previously mentioned. Specifically, Jin, Ros-Oton, and Xiong \cite{JROX24} proved that 
\begin{equation*}
	\frac{u}{\textnormal{dist}(\cdot,\partial \Omega)} \text{ is globally H\"older continuous} 
\end{equation*}
and by applying Schauder estimates, they obtained the optimal regularity results.

A key technique in this study is to interpret the parabolic equation as an elliptic one by leveraging the H\"older continuity of the time derivative. This approach can also be found in the study of the parabolic $p$-Laplace equation, we refer to \cite{LLYZ25} for details.

\subsection{Strategy of the proof}
The proof of the optimal regularity is divided into two main parts: the inhomogeneous case and the homogeneous case. Each part relies on a combination of compactness arguments and refined regularity theory.
\begin{enumerate} [label=(\roman*)]
\item Optimal regularity for the inhomogeneous equations: The key idea is to consider a distance function that preserves the parabolic scaling on $S_1$. This distance function enables us to capture the correct behavior of the solution along the normal direction. The strategy proceeds as follows:
\begin{itemize}
\item Introduce a distance function that preserves the parabolic scaling structure for \eqref{eq:main}.
\item We employ the boundary $C^{2,\alpha}$-regularity for the homogeneous equations from \cite{LY25} as a foundational estimate..
\item We exploit interior $C^{1,1^-}$-regularity to obtain control away from the boundary.
\item A compactness argument, combined with boundary blow-up argument, leads to the optimal regularity up to the boundary.
\end{itemize}
\item Optimal regularity for the homogeneous equations: This part of the analysis is based on differentiating the equation in time and exploiting the special structure of $u_t$ near the boundary:
\begin{itemize}
\item Noting that $u_t \approx x_n$ near $S_1$, we analyze the weighted time derivative $x_n^{-\gamma} u_t$, which turns out to be H\"older continuous.
\item This structure allows us to reduce the parabolic equation to an elliptic one in suitable coordinates, and we apply classical elliptic regularity theory to conclude.
\item When the boundary data vanishes, we apply a bootstrap argument to show that all higher-order tangential derivatives (excluding the $x_n$-direction) satisfy optimal regularity estimates.
\end{itemize}
\end{enumerate}

\subsection{Organization of the paper}
The paper is organized as follows.
\begin{itemize}
\item \Cref{sec:pre} provides preliminary material necessary for our analysis. We introduce notations, define the appropriate Hölder spaces, and recall the notion of viscosity solutions. We also present known regularity results that serve as key tools throughout the paper.
\item \Cref{sec:OR} is devoted to establishing the main results on optimal regularity. We divide the analysis into two parts:
\begin{itemize}
\item In \Cref{sec:OR_inhom}, we prove the optimal $C^{1,1-\gamma}$-regularity of viscosity solutions to the inhomogeneous equation up to the flat boundary under regularity assumptions on the data. The argument combines compactness methods with a boundary blow-up argument.
 \item In \Cref{sec:OR_hom}, we address the homogeneous case. Here, we exploit the structure of the equation to derive sharp time derivative estimates and use them to reduce the problem to elliptic equations on time slices. This leads to the optimal $C^{2,1-\gamma}$-regularity of the tangential derivatives.
\end{itemize}
\end{itemize}
%
%
\section{Preliminaries} \label{sec:pre}
This section provides a summary of notations, along with the definitions and established regularity results that are used throughout the paper.
\subsection{Notations} 
The following are the notations used in this paper.
\begin{enumerate} [label=(\arabic*)]
	\item Sets: 
		\begin{itemize}
			\item $\mathbb{R}^n_+ \coloneqq \{ x\in \mathbb{R}^n : x_n>0\}$.
			\item $\mathbb{N}_0 \coloneqq \mathbb{N} \cup \{0\}$.
			\item $B_r(x_0)\coloneqq \{x\in \mathbb{R}^n : |x-x_0| < r\} $.
			\item $B_r^+(x_0)\coloneqq B_r(x_0)  \cap \mathbb{R}^n_+$.
			\item $Q_r(x_0,t_0) \coloneqq B_r(x_0) \times ( t_0-r^{2-\gamma} , t_0]$.
			\item $Q_r^+(x_0,t_0)\coloneqq  B_r^+(x_0) \times ( t_0-r^{2-\gamma} , t_0]$.
			\item $Q_r\coloneqq Q_r(0,0)$ and $Q_r^+\coloneqq Q_r^+(0,0)$.
			\item $\mathcal{Q}_r(x_0,t_0) \coloneqq B_r(x_0) \times (t_0-r^2,t_0]$.
			\item $S_r \coloneqq \{(x,t) \in Q_r : x_n =0 \}$.
		\end{itemize}
	\item Points:
		\begin{itemize}
			\item $x'\coloneqq(x_1,\cdots, x_{n-1})$ for $x=(x_1,\cdots,x_n)$.
			\item $e_n \coloneqq (0',1) \in \mathbb{R}^n$.
		\end{itemize}
	\item Multiindex: A vector $\kappa =(\kappa_1,\cdots,\kappa_k) \in \mathbb{N}_{0}^k$ is called a $k$-dimensional multiindex of order $|\kappa| = \kappa_1 + \cdots + \kappa_k$.
	\item Let $\kappa =(\kappa_1,\cdots,\kappa_{n}) \in \mathbb{N}_0^n$ be a multiindex. We define the $|\kappa|$-th order partial derivatives of $u$ in the $(x',t)$-directions as follows:
	\begin{equation*}
	D_{(x',t)}^{\kappa} u \coloneqq \frac{\partial^{|\kappa|} u}{\partial x_1^{\kappa_1} \cdots \partial x_{n-1}^{\kappa_{n-1}}  \partial t^{\kappa_n}} .
\end{equation*}

	\item Let $\kappa=(\kappa_1,\cdots,\kappa_{n+1}) \in \mathbb{N}_0^{n+1}$ be a multiindex. The norm of a polynomial of 
\begin{equation*}
	P(x,t)=\sum_{\kappa_1+ \cdots + \kappa_n+2\kappa_{n+1} \leq k} \frac{ c_{\kappa} x_1^{\kappa_1} \cdots x_n^{\kappa_n} t^{\kappa_{n+1}} }{\kappa_1!   \cdots \kappa_{n+1}!} 
	\quad \text{with $\deg P=k$}
\end{equation*}	
is defined as follows:
\begin{equation*}
	\|P\| =  \sum_{\kappa_1+ \cdots + \kappa_n+2\kappa_{n+1} \leq k} |c_{\kappa}|.
\end{equation*}

\end{enumerate}
\subsection{H\"older spaces}
We define the H\"older spaces used throughout the paper.
\begin{definition} [H\"older spaces]
Let $\Omega \subset \mathbb{R}^{n+1}$ be a domain and let $\alpha \in (0,1)$.
\begin{enumerate} [label=(\roman*)]
	\item The space $C^{\alpha}(\overline{\Omega})$ consists of all functions $u \in C(\overline{\Omega} )$ satisfying the following condition holds: there exists $C>0$ such that 
	\begin{equation*}
		|u(x,t)-u(y,s)| \leq C(|x-y|+\sqrt{|t-s|})^{\alpha}
	\end{equation*}
	for all $(x,t), (y,s) \in \Omega$.
	\item The space $u \in C^{1,\alpha}(\overline{\Omega})$  consists of all functions $u\in C(\overline{\Omega})$ satisfying the following condition holds: for each $(x_0,t_0) \in \overline{\Omega}$, there exist $C>0$, $r>0$ and a linear function $L$ such that
	\begin{equation*}
		|u(x,t)-L(x)|\leq C(|x-x_0|+\sqrt{|t-t_0|})^{1+\alpha} 
	\end{equation*}
	for all $(x,t) \in \Omega \cap Q_r(x_0,t_0)$.
	\item The space $ C^{2,\alpha}(\overline{\Omega})$ consists of all functions $u\in C(\overline{\Omega})$ satisfying the following condition holds: for each $(x_0,t_0) \in \overline{\Omega}$, there exist $C>0$, $r>0$ and a quadratic polynomial $P$ such that
	\begin{equation*}
		|u(x,t)-P(x,t)|\leq C(|x-x_0|+\sqrt{|t-t_0|})^{2+\alpha} 
	\end{equation*}
	for all   $(x,t) \in \Omega \cap Q_r(x_0,t_0)$.
\end{enumerate}
\end{definition} 
Unlike interior H\"older continuity, H\"older continuity at the boundary points requires considering a metric that is invariant under the scaling of  \eqref{eq:main}, we refer to \cite[Section 2.1]{LY25}.
\begin{definition} [Pointwise H\"older continuity at the flat boundary] \label{bd_holder}
Let $(x_0,t_0) \in S_1$ and let $k\in \mathbb{N}$.
\begin{enumerate} [label=(\roman*)]
	\item We say that $g$ is $C^{k,\alpha}$ at $(x_0,t_0)$, denoted by $g \in C^{k,\alpha}(x_0,t_0)$, if there exist constant $C>0$, $r>0$ and a polynomial $P(x,t)$ with $\deg P \leq k$ such that 
	\begin{equation*} 
		|g(x,t)-P(x,t)| \leq C(|x-x_0|+|t-t_0|^{\frac{1}{2-\gamma}})^{k+\alpha} 
	\end{equation*}
	for all  $(x,t) \in Q_1^+ \cap Q_r^+(x_0,t_0)$.
	\item The space $ C^{k,\alpha}(\overline{S_1})$ consists of all functions $g \in C(\overline{S_1})$ such that
	\begin{equation*}
		g\in C^{k,\alpha}(x_0,t_0) \quad \text{for all }  (x_0,t_0) \in \overline{S_1}.
	\end{equation*}
\end{enumerate}
\end{definition}
\begin{remark}
Since $0<\gamma<1$, the metric 
\begin{equation*}
	d[(x,t),(y,s)] \coloneqq |x-y|+|t-s|^{\frac{1}{2-\gamma}}
\end{equation*}
in \Cref{bd_holder} has a higher-order in the time distance compared to the standard parabolic metric. Thus, the global regularity is ultimately governed by the regularity with respect to the standard parabolic metric.
\end{remark}
\subsection{Viscosity solutions}
We introduce the concept of solutions to \eqref{eq:main}. Since the main equation \eqref{eq:main} has a nondivergence structure, we adopt the notion of viscosity solutions.
\begin{definition}[Viscosity solutions] 
Suppose that $f$ is a continuous function in $Q_1^+$.
\begin{enumerate} [label=(\roman*)]
	\item Let $u$ be a upper semicontinuous function in $Q_1^+$. We say that $u$ is a \textit{viscosity subsoution} of \eqref{eq:main} when the following condition holds: if for each $(x,t) \in Q_1^+ $ and each function $\varphi \in C^2(Q_1^+)$, $u-\varphi$ attains a local maximum at $(x, t)$,  then
		\begin{equation*}
			\varphi_t (x,t) - x_n^\gamma \Delta \varphi (x,t )  \leq f(x,t).
		\end{equation*}
	\item Let $u$ be a lower semicontinuous function in $Q_1^+$. We say that $u$ is a \textit{viscosity supersoution} of \eqref{eq:main} when the following condition holds:  if for each $(x,t) \in Q_1^+ $ and each function $\varphi \in C^2(Q_1^+)$, $u-\varphi$ attains a local minimum at $(x, t)$,  then
		\begin{equation*}
			\varphi_t (x,t) - x_n^\gamma \Delta \varphi (x,t )  \ge f(x,t).
		\end{equation*}
\end{enumerate} 
\end{definition}
The existence and uniqueness of viscosity solutions to \eqref{eq:main} are obtained via the comparison principle and Perron method, as proved in \cite{LY25}.
\begin{theorem} [Existence of the viscosity solutions, {\cite[Lemma 2.16]{LY25}}]
Suppose that  $f \in C(Q_1^+) \cap L^{\infty}(Q_1^+)$ and $g \in C(\partial_p Q_1^+)$.
Then the Cauchy--Dirichlet problem
\begin{equation*}  
	\left\{\begin{aligned}
		u_t - x_n^{\gamma} \Delta u &= f && \text{in } Q_1^+\\
		u &= g && \text{on } \partial_p Q_1^+
	\end{aligned} \right.
\end{equation*}
has a unique viscosity solution.
\end{theorem}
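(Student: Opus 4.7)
The plan is to establish uniqueness via a comparison principle and existence via Perron's method, with the Dirichlet data on $\partial_p Q_1^+$ attained through explicit barriers.

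For uniqueness, I would prove the comparison principle by the standard doubling-of-variables technique. Assuming for contradiction that $\sup_{Q_1^+}(u-v)>0$, the parabolic penalty $\tfrac{\alpha}{2}(|x-y|^2+(t-s)^2)$ (augmented by a small quadratic damping so the supremum is attained in the interior) together with the parabolic Crandall--Ishii lemma produces doubled points $(\hat x,\hat t),(\hat y,\hat s)$, matrices $X\le Y$, and times $\tau_1=\tau_2$ satisfying the sub- and supersolution inequalities. Subtracting them yields
\[
0\le f(\hat x,\hat t)-f(\hat y,\hat s)+\hat x_n^\gamma\operatorname{tr}(X-Y)+\bigl(\hat x_n^\gamma-\hat y_n^\gamma\bigr)\operatorname{tr}(Y).
\]
The middle term is nonpositive since $\hat x_n^\gamma\ge 0$ and $X\le Y$, while the last is $o(1)$ as $\alpha\to\infty$ by the $\gamma$-H\"older continuity of $r\mapsto r^\gamma$ and the standard jet bound $|\hat x-\hat y|=O(\alpha^{-1/2})$, refined by an iterated penalization when both points approach $\{x_n=0\}$. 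Combined with continuity of $f$, this gives a contradiction.

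For existence, I would apply Perron's method to the envelope
\[
u(x,t):=\sup\{w(x,t):w\ \text{viscosity subsolution of \eqref{eq:main}},\ \underline u\le w\le \overline u\},
\]
where $\underline u\le\overline u$ are global sub/supersolutions matching $g$ on $\partial_p Q_1^+$. The standard Ishii argument (insensitive to the degeneracy) shows $u^*$ is a subsolution and $u_*$ a supersolution, and comparison forces $u^*=u_*$. Continuous attainment of $g$ is handled by barriers: on the portions of $\partial_p Q_1^+$ bounded away from $\{x_n=0\}$ the operator is uniformly parabolic and classical tent barriers suffice; at a point $(x_0,t_0)\in S_1$ I would use
\[
\phi(x,t)=Ax_n-Bx_n^{2-\gamma}+|x'-x_0'|^2+(t-t_0)^2,
\]
choosing $A\gg B\gg 1$ so that $\phi\ge 0$ in a small $Q_r^+(x_0,t_0)$, is strictly positive on its parabolic boundary, vanishes at $(x_0,t_0)$, and satisfies
\[
\phi_t-x_n^\gamma\Delta\phi=2(t-t_0)-2(n-1)x_n^\gamma+B(2-\gamma)(1-\gamma)\ \ge\ \tfrac12 B(2-\gamma)(1-\gamma)
\]
for $r$ small. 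Then $\overline w:=g(x_0,t_0)+\epsilon+K\phi$ is a classical supersolution dominating $g$ from above on $\partial_p$ for $K$ large, and a symmetric subsolution pinches $u(x,t)\to g(x_0,t_0)$.

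The main obstacle will be both the degenerate-face barrier and the corresponding step of the comparison principle, which together pivot on the profile $x_n^{2-\gamma}$. Its canonical role is forced by the scaling of the equation: $-x_n^\gamma\Delta$ applied to $x_n^{2-\gamma}$ yields the constant $(2-\gamma)(1-\gamma)$, the only positive quantity available to absorb $\|f\|_{L^\infty}$ uniformly as $x_n\to 0^+$. This is the same extremal profile as $\varphi$ in the introduction, and it makes transparent why the optimal boundary regularity in \Cref{thm:main} is precisely $C^{1,1-\gamma}$.
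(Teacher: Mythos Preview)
The paper does not actually prove this theorem; it is quoted verbatim from \cite{LY25} and only the one-line remark ``obtained via the comparison principle and Perron method'' precedes the statement. So there is no in-paper proof to compare against, and your overall strategy (comparison principle for uniqueness, Perron with barriers for existence) matches exactly what the paper says the cited reference does.

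That said, your sketch of the comparison principle contains a real gap. After Ishii's lemma you split
\[
\hat x_n^\gamma\operatorname{tr}X-\hat y_n^\gamma\operatorname{tr}Y
=\hat x_n^\gamma\operatorname{tr}(X-Y)+(\hat x_n^\gamma-\hat y_n^\gamma)\operatorname{tr}Y
\]
and claim the second term is $o(1)$. This fails: the Ishii matrix inequality only gives $|\operatorname{tr}Y|\le Cn\alpha$, while $|\hat x_n^\gamma-\hat y_n^\gamma|\le|\hat x-\hat y|^\gamma=o(\alpha^{-\gamma/2})$, so the product is $o(\alpha^{1-\gamma/2})\to\infty$. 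The standard remedy is not to split at all but to test the full matrix inequality
\[
\begin{pmatrix}X&0\\0&-Y\end{pmatrix}\le 3\alpha\begin{pmatrix}I&-I\\-I&I\end{pmatrix}
\]
against vectors of the form $(\hat x_n^{\gamma/2}e_i,\hat y_n^{\gamma/2}e_i)$; summing over $i$ gives
\[
\hat x_n^\gamma\operatorname{tr}X-\hat y_n^\gamma\operatorname{tr}Y\le 3n\alpha\,|\hat x_n^{\gamma/2}-\hat y_n^{\gamma/2}|^2,
\]
and since the doubled points converge to an interior maximizer with $x_{0,n}>0$, the map $r\mapsto r^{\gamma/2}$ is Lipschitz there, so the right-hand side is $C\alpha|\hat x-\hat y|^2\to 0$. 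With this correction (and the observation that the interior maximizer keeps the doubled points uniformly away from $\{x_n=0\}$, so no ``iterated penalization'' is needed), your argument goes through. Your barrier computation on $S_1$ is correct as written.
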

\subsection{Known Regularity Results}
The boundary Lipschitz estimates of solutions to \eqref{eq:main} was proved in \cite{LY25}. The following lemma is needed to obtain time derivative estimates for the homogeneous equations.
\begin{lemma} [Boundary Lipschitz estimates, {\cite[Lemma 3.1 and Remark 3.2]{LY25}}]\label{thm:LY25_lip}
Suppose $f\in C(Q_1^+) \cap L^\infty(Q_1^+)$. Let $u$ be a viscosity solution to 
\begin{equation*}  
\left\{\begin{aligned}
	u_t - x_n^\gamma \Delta u &= f && \text{in } Q_1^+ \\
	u&=0 && \text{on }S_1.
\end{aligned}\right.
\end{equation*}
Then 
\begin{equation*}
	|u(x,t)|\leq C (\|u\|_{L^\infty(Q_1^+)}  + \|f\|_{L^\infty(Q_1^+)})x_n \quad \text{for all } (x,t) \in \overline{Q_{1/2}^+},
\end{equation*}
where $C>0$ is a constant depending only on $n$ and $\gamma$.
\end{lemma}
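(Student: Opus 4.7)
The plan is to combine an explicit barrier with the viscosity comparison principle available for \eqref{eq:main}. After dividing $u$ by $M:=\|u\|_{L^\infty(Q_1^+)}+\|f\|_{L^\infty(Q_1^+)}$, I may assume $\|u\|_\infty\le 1$ and $\|f\|_\infty\le 1$. Fix $(x_*,t_*)\in \overline{Q_{1/2}^+}$; if $(x_*)_n=0$ the estimate is trivial, so I take $(x_*)_n>0$ and aim for $|u(x_*,t_*)|\le K\,(x_*)_n$ with $K=K(n,\gamma)$. Since \eqref{eq:main} and $S_1$ are invariant under tangential and time translations, I shift coordinates so that $(x_*)'=0$ and $t_*=0$. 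The nearest boundary point is then the origin, and since $\gamma<1$ gives $2(1/2)^{2-\gamma}<1$, the standard cylinder $Q_{1/2}^+$ lies inside the shifted $Q_1^+$, so it suffices to work in $Q_{1/2}^+$.

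Next I would build the barrier
\[
w(x,t)\;=\;K_1\,x_n\;+\;K_2\bigl(|x'|^2-t\bigr)\;-\;\frac{K_3}{(2-\gamma)(1-\gamma)}\,x_n^{2-\gamma}.
\]
A direct computation gives $w_t-x_n^\gamma\Delta w = K_3-K_2-2(n-1)K_2\,x_n^\gamma$, so $w_t-x_n^\gamma\Delta w\ge 1\ge f$ holds in $Q_{1/2}^+$ as soon as $K_3\ge (2n-1)K_2+1$. The $-x_n^{2-\gamma}$ correction is the only elementary ingredient that produces a positive right-hand side surviving the $x_n^\gamma$-degeneracy at $\{x_n=0\}$, as was already observed in the introduction with $\varphi=x_n^{2-\gamma}/[(2-\gamma)(1-\gamma)]$. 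The nonnegative term $K_2(|x'|^2-t)$ handles the non-flat part of $\partial_p Q_{1/2}^+$: taking $K_2\ge 4$ forces $w\ge 1$ both on the portions of $\{|x|=1/2\}$ with $x_n\to 0$ and on the initial slice $\{t=-(1/2)^{2-\gamma}\}$ (using $(1/2)^{2-\gamma}\ge 1/4$). Finally, choosing $K_1$ large enough (depending on $K_3$ and $\gamma$) dominates $u$ on the remaining piece of $\{|x|=1/2\}$ with $x_n$ close to $1/2$ and secures $w\ge 0$ throughout $Q_{1/2}^+$ (the latter reducing to $K_1\ge K_3/[(2-\gamma)(1-\gamma)]$ since $x_n^{1-\gamma}\le 1$ there).

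By the viscosity comparison principle recorded in the Preliminaries, together with $u=0\le w$ on $S_{1/2}$ and $u\le 1\le w$ on the rest of $\partial_p Q_{1/2}^+$, I obtain $u\le w$ in $Q_{1/2}^+$. Applying the same argument to $-u$, which solves \eqref{eq:main} with right-hand side $-f$ and zero boundary data on $S_1$, gives $-u\le w$. Evaluating $w$ at the point of interest,
\[
|u(x_*,t_*)|\;\le\; w(0,(x_*)_n,0)\;=\;K_1(x_*)_n\;-\;\frac{K_3\,(x_*)_n^{2-\gamma}}{(2-\gamma)(1-\gamma)}\;\le\; K_1\,(x_*)_n,
\]
and undoing the normalization recovers the stated bound with $C=K_1$.

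The main obstacle is the barrier design in the second step. The three requirements---being a supersolution, dominating $u$ on the non-flat parts of $\partial_p Q_{1/2}^+$, and vanishing to first order at a prescribed boundary point---pull against each other: the supersolution condition forces a \emph{negative} $x_n^{2-\gamma}$ contribution (the unique elementary term whose image under $-x_n^\gamma\Delta$ is a positive constant), while boundary domination demands growth in the tangential and time variables matching the intrinsic parabolic scaling $|x'|\sim |t|^{1/(2-\gamma)}$, and the vanishing condition rules out any additive constant and forces the barrier to be centered at the boundary point. Balancing these via the three-parameter combination above, and selecting $K_2,\,K_3,\,K_1$ in that order, is the technical heart of the argument; a minor secondary point is that $w$ is only $C^2$ in the open set $\{x_n>0\}$, which is precisely the setting in which the comparison principle of the Preliminaries is designed to apply.
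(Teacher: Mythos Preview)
The paper does not prove this lemma; it is quoted from \cite{LY25} in the ``Known Regularity Results'' subsection, so there is no in-paper argument to compare against. Your barrier-plus-comparison argument is the standard route to such boundary Lipschitz estimates and is essentially correct: the combination
\[
w=K_1 x_n+K_2(|x'|^2-t)-\tfrac{K_3}{(2-\gamma)(1-\gamma)}x_n^{2-\gamma}
\]
is a classical supersolution once $K_3\ge(2n-1)K_2+1$, the choice $K_1\ge K_3/[(2-\gamma)(1-\gamma)]$ makes the linear term absorb the fractional one throughout $\{0<x_n<1\}$, and the translation reduction together with the check that the shifted $Q_{1/2}^+$ fits inside $Q_1^+$ (via $2(1/2)^{2-\gamma}=(1/2)^{1-\gamma}<1$) is carried out correctly. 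Evaluating the barrier at $(0',(x_*)_n,0)$ kills the $K_2$ term and gives exactly the linear bound you want.

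One minor arithmetic imprecision: the specific value $K_2\ge4$ does not by itself force $w\ge1$ on the whole ``small $x_n$'' portion of the lateral boundary $\{|x|=1/2\}$ (for instance at $x_n=1/4$ one has $K_2(1/4-1/16)=3K_2/16<1$ when $K_2=4$). This is only bookkeeping; the scheme you describe---fix a threshold $\theta\in(0,1/2)$, use $K_2|x'|^2\ge K_2(1/4-\theta^2)$ for $x_n\le\theta$ and $K_1 x_n\ge K_1\theta$ for $x_n>\theta$, selecting $K_2,K_3,K_1$ in that order---works once the constants are tuned accordingly.
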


The $C^{2,\alpha}$-regularity of viscosity solutions to \eqref{eq:main_hom} was proved in \cite{LY25}. By considering  $F(M)=\textnormal{tr}(M)$ in \cite[Lemma 4.1]{LY25}, we obtain the following boundary $C^{2,\alpha}$-estimates.
\begin{lemma} [Boundary $C^{2,\alpha}$-estimates, {\cite[Lemma 4.1]{LY25}}]\label{thm:LY25_c2a}
Let  $u$ be a viscosity solution to \eqref{eq:f=g=0}. Then, there exist a constant $\sigma=\sigma(n,\gamma) \in (0, 1)$ and  a polynomial 
\begin{equation*}
	P(x)= a_0x_n + \sum_{i=1}^n a_i x_i x_n
\end{equation*}
such that $\Delta P =0$, $\|P\| \leq C_1$ and
\begin{equation*}
	|u(x,t)-P(x)|\leq C_1\|u\|_{L^\infty(Q_1^+)} |x|^{1+\sigma}x_n \quad \text{for all } (x,t) \in \overline{Q_{1/2}^+},\end{equation*}
where $C_1>0$ is a constant depending only on $n$, $\sigma$ and $\gamma$. 

Moerover, there exists $a\in \mathbb{R}$  such that $|a|\leq C_1$ and  
\begin{equation*}
	|u_t(x,t)-ax_n| \leq C_1\|u\|_{L^\infty(Q_1^+)} (|x|+|t|^{\frac{1}{2-\gamma}})^{\sigma}  x_n \quad \text{for all } (x,t) \in \overline{Q_{1/2}^+}.
\end{equation*}
\end{lemma}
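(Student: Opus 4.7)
The plan is to prove this boundary $C^{2,\sigma}$-estimate by the familiar improvement-of-flatness scheme: a one-step approximation lemma proved by compactness, iterated dyadically, and then combined with \Cref{thm:LY25_lip} to recover the $x_n$ factor. The polynomial class appearing in the conclusion is exactly the set of time-independent, at-most-quadratic polynomials that vanish on $\{x_n=0\}$ and satisfy $\Delta P=0$; note that a direct computation of $\Delta(a_0x_n+\sum_{i=1}^{n}a_ix_ix_n)=2a_n$ forces $a_n=0$, so such $P$ are genuine (time-independent) solutions of $\partial_t-x_n^\gamma\Delta$ vanishing on $S_1$, which is why they form the natural approximating family.

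First, I would prove a one-step lemma: there exist $\lambda,\sigma\in(0,1)$ and $M>0$, depending only on $n$ and $\gamma$, such that every viscosity solution $u$ of \eqref{eq:f=g=0} with $\|u\|_{L^{\infty}(Q_1^+)}\le 1$ admits a polynomial $P$ of the stated form with $\|P\|\le M$ and $\sup_{Q_{\lambda}^+}|u-P|\le\lambda^{2+\sigma}$. The argument is by contradiction: a sequence $u_k$ of normalized solutions for which the conclusion fails is, by interior H\"older estimates and the boundary Lipschitz bound of \Cref{thm:LY25_lip}, equicontinuous on compact subsets of $\overline{Q_{1/2}^+}$, hence (after a subsequence) converges locally uniformly to a limit $u_\infty$ solving the same homogeneous equation with zero data on $S_1$. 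A Liouville/higher-regularity statement for the model problem then provides a quadratic expansion of $u_\infty$ at the origin of the required form, contradicting the assumed failure. Iterating on the rescalings
\[
\tilde u(x,t)\coloneqq\lambda^{-(2+\sigma)}\bigl[u(\lambda x,\lambda^{2-\gamma}t)-P(\lambda x)\bigr],
\]
which again solve \eqref{eq:f=g=0} since $P$ is itself a solution vanishing on $S_1$, produces polynomials $P_k$ with $\|P_{k+1}-P_k\|\le C\lambda^{k(1+\sigma)}$ summing to a limit $P$ that satisfies $|u-P|\le C|x|^{2+\sigma}\|u\|_{L^{\infty}(Q_1^+)}$ in parabolic cylinders at every dyadic scale.

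To sharpen this into the stated bound with the extra $x_n$ factor, I would apply \Cref{thm:LY25_lip} on each scale to the normalized error
\[
w(x,t)\coloneqq\lambda^{-k(2+\sigma)}\bigl[u(\lambda^k x,\lambda^{k(2-\gamma)}t)-P(\lambda^k x)\bigr],
\]
which solves the homogeneous equation on $Q_1^+$ with zero data on $S_1$ and controlled $L^{\infty}$-norm, yielding $|w|\le Cx_n$. Undoing the rescaling and patching dyadic levels gives $|u-P|\le C\|u\|_{L^{\infty}}|x|^{1+\sigma}x_n$, which is the claimed bound. For the assertion on $u_t$, I would apply the analogous compactness/iteration machinery to the time difference quotient $u^h(x,t)\coloneqq h^{-1}[u(x,t+h)-u(x,t)]$, which still satisfies the same homogeneous equation with zero data on $S_1$; here only a degree-one approximant $ax_n$ is required, consistent with $u_t\sim x_n$ near the boundary. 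Passing $h\to 0$ (justified by the $C^{2,\sigma}$-regularity just obtained) and again using \Cref{thm:LY25_lip} to extract the $x_n$ factor yields $|u_t-ax_n|\le C(|x|+|t|^{1/(2-\gamma)})^\sigma x_n$.

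The main obstacle is the one-step lemma, specifically the verification that the compactness limit $u_\infty$ admits the desired quadratic boundary expansion with the structural constraint $\Delta P=0$. Because the boundary $\{x_n=0\}$ is characteristic for $\partial_t-x_n^\gamma\Delta$ and the coefficient degenerates, the higher-regularity input on $u_\infty$ must come either from a Liouville theorem classifying global solutions of controlled growth that vanish on $S_1$, or from a Schauder-type boundary estimate for the model degenerate operator. Once such input is available, the constraint $\Delta P=0$ falls out naturally: any candidate polynomial $P$ for the expansion must satisfy $P_t-x_n^\gamma\Delta P = 0$ through second order near $S_1$, which forces the coefficient of $x_n^2$ to vanish.
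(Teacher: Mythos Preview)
The paper does not prove this lemma: it is quoted from \cite[Lemma~4.1]{LY25} in the ``Known Regularity Results'' subsection and used as a black box (for instance, in the proof of \Cref{lem:first_step} it is applied to the compactness limit $u^\infty$). There is therefore no proof in the present paper to compare against.

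Your improvement-of-flatness outline is the standard architecture for such a result, and the iteration step, the use of \Cref{thm:LY25_lip} to recover the extra $x_n$ factor, and the difference-quotient argument for $u_t$ are all sound. But the point you flag in your last paragraph is not a side obstacle: it is the entire content of the lemma. In your one-step compactness argument the limit $u_\infty$ solves \emph{exactly the same problem} \eqref{eq:f=g=0} on $Q_1^+$; there is no simplification of the operator, the domain, or the data in passing to the limit. Invoking ``a Liouville/higher-regularity statement for the model problem'' on $u_\infty$ is therefore circular unless that statement is established by an independent argument. Supplying that independent input (via a Liouville theorem on the half-space, a barrier/weighted-norm Schauder argument tailored to the degeneracy, or a bootstrap through tangential derivatives starting from the $C^{1,\alpha}$ theory) is precisely the work done in \cite{LY25}; your sketch does not reproduce it.
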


To prove the main lemmas using a compactness argument, the following stability theorem is required.
\begin{lemma}[Stability theorem, {\cite[Theorem 2.10]{LLY24}}]\label{thm-stability}
	Suppose that $f^k \in C(Q_1^+) \cap L^\infty(Q_1^+)$ for all $k \in \mathbb{N}$. Let $u^k$ be a viscosity subsolution (resp. supersolution) of
	\begin{align*}
		\partial_t u^k - x_n^\gamma \Delta u^k  = f^k \quad \text{in $Q_1^+$}.
	\end{align*}
Moreover, assume that 
\begin{equation*}
	\text{$u^k \to u$ locally uniformly} \quad \text{and} \quad \text{$f^k \to f$ locally uniformly}
\end{equation*}
as $k \to \infty$. Then $u$ is a viscosity subsolution (resp. supersolution) of
\begin{align*}
	u_t -x_n^\gamma \Delta u = f \quad \text{in $Q_1^+$}.
\end{align*}
\end{lemma}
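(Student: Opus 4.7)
The plan is to adapt the classical Crandall--Ishii--Lions stability argument to our parabolic degenerate setting, observing that the coefficient degeneracy at $\{x_n=0\}$ causes no difficulty, since the viscosity inequality is tested only at interior points of $Q_1^+$ where $x_n>0$ and hence $x \mapsto x_n^\gamma$ is smooth on a local neighborhood. I would carry out the subsolution case in detail; the supersolution case follows symmetrically (for instance by applying the same argument to $-u^k$ with $-f^k$).

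First I would fix a test function $\varphi \in C^2(Q_1^+)$ and a point $(x_0, t_0) \in Q_1^+$ at which $u - \varphi$ attains a local maximum, and reduce to the case of a strict local maximum by the standard perturbation $\tilde\varphi(x,t) \coloneqq \varphi(x,t) + |x-x_0|^4 + (t-t_0)^4$, which does not alter $\varphi_t$, $\nabla\varphi$, or $D^2\varphi$ at $(x_0, t_0)$. Then choose $r>0$ small enough that $\overline{Q_r(x_0,t_0)} \subset Q_1^+$, that $\inf_{Q_r(x_0,t_0)} x_n > 0$, and that $(x_0, t_0)$ is the unique maximizer of $u - \tilde\varphi$ on $\overline{Q_r(x_0,t_0)}$; the last condition uses the strictness produced by the perturbation together with the upper semicontinuity of $u$.

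For each $k$, select a maximizer $(x_k, t_k) \in \overline{Q_r(x_0,t_0)}$ of $u^k - \tilde\varphi$. The local uniform convergence $u^k \to u$ combined with the uniqueness of the maximizer of $u-\tilde\varphi$ forces $(x_k, t_k) \to (x_0, t_0)$ along every subsequence, so the full sequence converges and $(x_k, t_k)$ lies in the interior of $Q_r(x_0,t_0)$ for all large $k$. The viscosity subsolution property of $u^k$ then yields
\begin{equation*}
	\tilde\varphi_t(x_k, t_k) - (x_k)_n^\gamma \Delta \tilde\varphi(x_k, t_k) \leq f^k(x_k, t_k),
\end{equation*}
and letting $k \to \infty$, the continuity of $\tilde\varphi_t$, $\Delta\tilde\varphi$, and of $x \mapsto x_n^\gamma$ on a neighborhood of $x_0$, together with local uniform convergence $f^k \to f$ and continuity of $f$, delivers
\begin{equation*}
	\varphi_t(x_0, t_0) - (x_0)_n^\gamma \Delta \varphi(x_0, t_0) \leq f(x_0, t_0),
\end{equation*}
completing the proof. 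The only step requiring care is the convergence of the maximizers, which is standard once strictness has been arranged; there is no genuine obstacle here, as the boundary degeneracy is invisible to the test from the interior.
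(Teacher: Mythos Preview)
The paper does not prove this lemma at all; it merely quotes it from \cite[Theorem~2.10]{LLY24}, so there is no in-paper argument to compare against. Your proposal is the standard Crandall--Ishii--Lions stability argument and is correct: since the viscosity inequality is tested only at points of $Q_1^+$ where $x_n>0$, the coefficient $x_n^\gamma$ is smooth near every test point and the usual strict-maximum perturbation plus convergence-of-maximizers routine goes through verbatim.
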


%
%
\section{Optimal regularity} \label{sec:OR}

Throughout this section, the constants $\bar\alpha$,  $\sigma$ and $C_1$ refer to the constants from \Cref{thm:main}, \Cref{thm:main2} and \Cref{thm:LY25_c2a}, respectively.

\subsection{Optimal regularity for inhomogeneous equations} \label{sec:OR_inhom}
In this subsection, we address the optimal regularity of solutions to \eqref{eq:prob}. Although the exponent of the constant $\eta$ appearing in  \Cref{lem:first_step} and \Cref{lem:next_step} cannot be precisely determined in the general case, we will choose the exponent of $\eta$ in these lemmas to be optimal.
\begin{lemma} \label{lem:first_step}
There exists $\delta >0$ depending only on $n$  and $\gamma$ such that if $u$ is a viscosity solution to \eqref{eq:prob} satisfying
\begin{equation*}
	 \|u\|_{L^{\infty}(Q_1^+)} \leq 1, \quad 
	 \|f\|_{L^{\infty}(Q_1^+)} \leq \delta \quad \text{and} \quad
	 \|g\|_{L^{\infty}(S_1)} \leq \delta, 
\end{equation*}
then there exists a constant $a \in \mathbb{R}$ such that
\begin{equation*}
	\|u-ax_n\|_{L^{\infty}(Q_{\eta}^+)} \leq \eta^{2-\gamma}  \quad \text{and} 
	\quad |a| \leq C_1,
\end{equation*}
where $\eta \in (0,1)$ is a constant depending only on $n$ and $\gamma$.
\end{lemma}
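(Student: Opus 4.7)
The plan is a compactness and contradiction argument. Suppose the conclusion fails; then for each $k \in \mathbb{N}$ one can find $f_k$ with $\|f_k\|_{L^\infty(Q_1^+)} \leq 1/k$, $g_k$ with $\|g_k\|_{L^\infty(S_1)} \leq 1/k$, and a viscosity solution $u_k$ of \eqref{eq:prob} (with these data) satisfying $\|u_k\|_{L^\infty(Q_1^+)} \leq 1$ yet
\begin{equation*}
	\|u_k - a x_n\|_{L^\infty(Q_\eta^+)} > \eta^{2-\gamma} \quad \text{for every } a \in [-C_1, C_1],
\end{equation*}
where $\eta \in (0,1)$ will be fixed at the end depending only on $n$ and $\gamma$.

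The first step is to extract a uniformly convergent subsequence. Interior H\"older regularity from \cite{LY25} gives uniform equicontinuity on compact subsets of $Q_{1/2}^+$, while a standard boundary barrier argument, exploiting the smallness of $\|g_k\|_{L^\infty}$ in combination with a \Cref{thm:LY25_lip}-type estimate applied to $u_k$ minus a barrier dominating $|g_k|$, supplies uniform continuity up to $S_{1/2}$. Arzel\`a-Ascoli then yields a subsequence $u_k \to u_\infty$ converging uniformly on $\overline{Q_{1/2}^+}$. Since $f_k \to 0$ uniformly, \Cref{thm-stability} shows that $u_\infty$ is a viscosity solution of $\partial_t u_\infty - x_n^\gamma \Delta u_\infty = 0$ in $Q_{1/2}^+$, and passing to the limit on $S_{1/2}$ forces $u_\infty \equiv 0$ there.

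Next, a rescaled version of \Cref{thm:LY25_c2a} applied to $u_\infty$ produces a polynomial $P(x) = a_\infty x_n + \sum_{i=1}^n b_i x_i x_n$ with $|a_\infty| + \sum |b_i| \leq C_1$ and $|u_\infty(x,t) - P(x)| \leq C_1 |x|^{1+\sigma} x_n$ on $\overline{Q_{1/4}^+}$. Restricting to $Q_\eta^+$,
\begin{equation*}
	|u_\infty(x,t) - a_\infty x_n| \leq C_1 \eta^{2+\sigma} + C_1 \eta^2.
\end{equation*}
Since $\sigma, \gamma > 0$, both $\eta^{2+\sigma}$ and $\eta^2$ are $o(\eta^{2-\gamma})$ as $\eta \to 0$, so $\eta = \eta(n,\gamma)$ can be chosen small enough that this bound is at most $\eta^{2-\gamma}/2$. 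For $k$ sufficiently large, uniform convergence gives $\|u_k - u_\infty\|_{L^\infty(Q_\eta^+)} \leq \eta^{2-\gamma}/2$, and the triangle inequality then yields $\|u_k - a_\infty x_n\|_{L^\infty(Q_\eta^+)} \leq \eta^{2-\gamma}$ with $|a_\infty| \leq C_1$, contradicting the standing assumption on $u_k$.

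The main obstacle is the compactness step. Because only $L^\infty$ control on the boundary data is available, the uniform equicontinuity up to $S_{1/2}$ cannot be read off from a global H\"older-norm bound on $g_k$ and must instead be produced via boundary barrier estimates tailored to the degenerate operator $\partial_t - x_n^\gamma \Delta$. Once this compactness is in place, identifying the limit via \Cref{thm-stability} and applying the boundary $C^{2,\alpha}$ estimate of \Cref{thm:LY25_c2a} are routine.
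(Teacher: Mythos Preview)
Your proposal is correct and follows essentially the same compactness--contradiction scheme as the paper: negate the conclusion, extract a convergent subsequence, identify the limit as a solution of \eqref{eq:f=g=0} via \Cref{thm-stability}, apply \Cref{thm:LY25_c2a}, and choose $\eta$ so that $2C_1\eta^2 < \tfrac12 \eta^{2-\gamma}$ to reach a contradiction. The only noteworthy difference is that you explicitly secure equicontinuity up to $S_{1/2}$ via barriers, whereas the paper works with convergence on compact subsets of $Q_1^+$ and passes to the limit in $\|u^{k_j}-a_0x_n\|_{L^\infty(Q_\eta^+)}$ without spelling out the boundary control; your treatment of this point is in fact more careful.
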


\begin{proof}
Let us assume that the conclusion is false, that is, there exists a sequence of functions $\{(u^k,f^k,g^k) \mid k \in \mathbb{N}\}$ such that  
\begin{equation*}
	\|u^k\|_{L^{\infty}(Q_1^+)} \leq 1, \quad \|f^k\|_{L^{\infty}(Q_1^+)} \leq \frac{1}{k}, \quad   \|g^k\|_{L^{\infty}(S_1)} \leq \frac{1}{k}
\end{equation*}
and
\begin{equation*}
	\left\{\begin{aligned}
		\partial_t u^k -x_n^\gamma \Delta u^k &= f^k  && \text{in } Q_1^+ \\
		u^k &= g^k && \text{on } S_1.
	\end{aligned}\right.
\end{equation*}
Moreover, the following estimates hold:
\begin{equation}\label{eq:contradiction}
	\|u^k-ax_n\|_{L^{\infty}(Q_{\eta}^+)} >\eta^{2-\gamma} \quad \text{for all }  a \in \mathbb{R} \text{ with } |a| \leq C_1,
\end{equation}
where $\eta \in (0,1)$ will be determined later.

Since $\|u^k\|_{L^{\infty}(Q_1^+)} \leq 1$ for all $k \in \mathbb{N}$, it follows from the interior H\"older regularity for uniformly parabolic equations that $\{u^k \mid  k\in \mathbb{N} \}$ is equicontinuous in any compact set $K \subset  Q_1^+$. So, by Arzela-Ascoli theorem, there exist a subsequence $\{u^{k_j} \}$ of $\{u^k \mid  k\in \mathbb{N} \}$ and a function $u^\infty \in C(Q_1^+)$ such that 
\begin{equation*}
	\text{$u^{k_j} \to u^\infty$ uniformly in any compact set $K \subset Q_1^+$ as $j \to \infty$.}
\end{equation*}
Since $\|f^{k_j}\|_{L^{\infty}(Q_1^+)} \leq 1/k_j$ and $\|g^{k_j}\|_{L^{\infty}(S_1)} \leq 1/k_j$, we can see that
\begin{equation*}
	\lim_{j \to \infty} \|f^{k_j}\|_{L^{\infty}(Q_1^+)} = 0 \quad \text{and} \quad 
	\lim_{j \to \infty} \|g^{k_j}\|_{L^{\infty}(S_1)} = 0.
\end{equation*}
So,  \Cref{thm-stability} yields  that 
\begin{equation*}
	\left\{\begin{aligned}
		\partial_t u^\infty -x_n^\gamma \Delta u^\infty & =0 && \text{in } Q_1^+ \\
		u^\infty &= 0 && \text{on } S_1.
	\end{aligned}\right.
\end{equation*}
Then, by \Cref{thm:LY25_c2a}, there exists $P(x)=a_0 x_n + \sum_{i=1}^na_i x_i x_n$ such that
\begin{align*}
	|u^\infty (x,t)- a_0 x_n| \leq \left(\sum_{i=1}^n |a_i x_i| + C_1|x|^{1+\sigma} \right)x_n \quad \text{for all $(x,t) \in \overline{Q_{1/2}^+}$}
\end{align*}
and $a_0 \leq \|P\| \leq C_1$.

Finally, by taking $\eta < (\frac{1}{4C_1})^{1/\gamma}$,  we have
\begin{equation} \label{eq:contradiction2}
	\|u^\infty-a_0 x_n\|_{L^{\infty}(Q_{\eta}^+)} \leq (\|P\| \eta + C_1 \eta^{1+\sigma}) \eta \\
	\leq 2C_1 \eta^2 \\
	< \frac{1}{2} \eta^{2-\gamma}.
\end{equation}

On the other hand, letting $k=k_j \to \infty$ in \eqref{eq:contradiction} for $a=a_0$ yields 
\begin{align*}
	\|u^\infty-a_0 x_n\|_{L^{\infty}(Q_{\eta}^+)} \ge\eta^{2-\gamma},
\end{align*}
which contradicts \eqref{eq:contradiction2}.
\end{proof}
\begin{lemma} \label{lem:next_step}
	Let  $\delta>0$ and $\eta >0$ be as in \Cref{lem:first_step}. Let $\beta \in(0,1)$ and let  $u$ be a viscosity solution to \eqref{eq:prob} satisfying 
\begin{equation*}
	\|u\|_{L^{\infty}(Q_1^+)} \leq 1, \quad \|f\|_{L^{\infty}(Q_1^+)} \leq \delta
\end{equation*}
and 
\begin{equation} \label{c1b_g}
	|g(x,t)| \leq 2^{-1-\beta} \delta (|x|+|t|^{\frac{1}{2-\gamma}})^{1+\beta} \quad \text{for all } (x,t) \in S_1.
\end{equation}
Then there exists a sequence $\{a_k \mid k\ge -1 \}$
such that for each $k \geq 0$, we have 
\begin{align}\label{induction}
	|a_k-a_{k-1}| \leq  C_1 \eta^{(k-1) \bar\alpha}
	\quad \text{and} \quad  
	\|u-a_kx_n\|_{L^{\infty}(Q_{\eta^k}^+)} \leq \eta^{k(1+\bar\alpha)} .
\end{align}
\end{lemma}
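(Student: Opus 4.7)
The plan is a standard induction on $k$, rescaling after each step so that \Cref{lem:first_step} can be reapplied. I would set $a_{-1}=a_0=0$ to handle the base case $k=0$: then $|a_0-a_{-1}|=0\leq C_1\eta^{-\bar\alpha}$ and $\|u-a_0x_n\|_{L^\infty(Q_1^+)}=\|u\|_{L^\infty(Q_1^+)}\leq 1=\eta^0$.

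For the inductive step, assuming \eqref{induction} holds at level $k$, I would introduce the rescaled function
\begin{equation*}
v(x,t)\coloneqq \eta^{-k(1+\bar\alpha)}\bigl[u(\eta^k x,\eta^{k(2-\gamma)}t)-a_k\eta^k x_n\bigr],\qquad (x,t)\in Q_1^+.
\end{equation*}
Using the homogeneity of $\partial_t-x_n^\gamma\Delta$ under the scaling $(y,s)=(\eta^k x,\eta^{k(2-\gamma)}t)$ and the fact that $a_k\eta^k x_n$ lies in the kernel of this operator, a direct computation shows that $v$ solves $v_t-x_n^\gamma\Delta v=\tilde f$ in $Q_1^+$ with $v=\tilde g$ on $S_1$, where
\begin{equation*}
\tilde f(x,t)=\eta^{k(1-\gamma-\bar\alpha)}f(\eta^k x,\eta^{k(2-\gamma)}t),\qquad \tilde g(x,t)=\eta^{-k(1+\bar\alpha)}g(\eta^k x,\eta^{k(2-\gamma)}t).
\end{equation*}

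The critical verification is that $(v,\tilde f,\tilde g)$ satisfies the hypotheses of \Cref{lem:first_step}. The inductive hypothesis gives $\|v\|_{L^\infty(Q_1^+)}\leq 1$ immediately. Since $\bar\alpha\leq 1-\gamma$, the exponent $k(1-\gamma-\bar\alpha)$ is nonnegative, so $\|\tilde f\|_{L^\infty(Q_1^+)}\leq\delta$. For $\tilde g$, the assumption \eqref{c1b_g} combined with $\bar\alpha\leq\beta$ gives, on $S_1$,
\begin{equation*}
|\tilde g(x,t)|\leq 2^{-1-\beta}\delta\,\eta^{k(\beta-\bar\alpha)}\bigl(|x|+|t|^{1/(2-\gamma)}\bigr)^{1+\beta}\leq\delta.
\end{equation*}

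\Cref{lem:first_step} applied to $v$ then produces $\bar a\in\mathbb R$ with $|\bar a|\leq C_1$ and $\|v-\bar ax_n\|_{L^\infty(Q_\eta^+)}\leq\eta^{2-\gamma}$. Setting $a_{k+1}\coloneqq a_k+\bar a\eta^{k\bar\alpha}$ and unwinding the rescaling (noting $\bar a\eta^{k(1+\bar\alpha)}x_n=\bar a\eta^{k\bar\alpha}y_n$) yields $|a_{k+1}-a_k|=|\bar a|\eta^{k\bar\alpha}\leq C_1\eta^{k\bar\alpha}$ and
\begin{equation*}
\|u-a_{k+1}x_n\|_{L^\infty(Q_{\eta^{k+1}}^+)}\leq \eta^{k(1+\bar\alpha)}\cdot\eta^{2-\gamma}\leq\eta^{(k+1)(1+\bar\alpha)},
\end{equation*}
where the last step again uses $1+\bar\alpha\leq 2-\gamma$. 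This closes the induction.

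The main obstacle is not conceptual but bookkeeping: one must track four scaling exponents simultaneously ($v$, $\tilde f$, $\tilde g$, and the final error). The threshold $\bar\alpha=\min\{\beta,1-\gamma\}$ is precisely forced by this calculation, as it is the largest exponent for which both $\tilde f$ and $\tilde g$ remain within the hypothesis class after rescaling, while still allowing the $\eta^{2-\gamma}$ decay from \Cref{lem:first_step} to close the induction at rate $\eta^{1+\bar\alpha}$.
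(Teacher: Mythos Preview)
Your proof is correct and essentially identical to the paper's: both set $a_{-1}=a_0=0$ for the base case, rescale by $r=\eta^k$ to define $v$, verify that the rescaled data $(\tilde f,\tilde g)$ satisfy the hypotheses of \Cref{lem:first_step} via the inequalities $\bar\alpha\le 1-\gamma$ and $\bar\alpha\le\beta$, and then set $a_{k+1}=a_k+\eta^{k\bar\alpha}\bar a$ to close the induction using $\eta^{2-\gamma}\le\eta^{1+\bar\alpha}$. Your closing remark on why $\bar\alpha=\min\{\beta,1-\gamma\}$ is forced is a helpful piece of exposition not explicitly stated in the paper.
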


\begin{proof}
	We prove it by induction. 
	\begin{enumerate} [label=(\roman*)]
	\item In the case $k=0$, setting  $a_{-1}=a_0=0$ makes \eqref{induction} hold.
	\item Assume that \eqref{induction} holds for $k\ge 0$. Then we will prove that  \eqref{induction} also holds for $k+1$.
		Let $r=\eta^k$, $y=x/r$, $s=t/r^{2-\gamma}$ and consider the following function 
	\begin{align*}
		v(y,s) \coloneqq \frac{u(x,t)-a_kx_n}{r^{1+\bar\alpha}}.
	\end{align*}
	Then $v$ is a viscosity solution to
	\begin{equation*}
		\left\{\begin{aligned}
			\partial_s v - x_n^\gamma \Delta v&= \widetilde{f}  && \text{in } Q_1^+ \\
			v &= \widetilde{g} && \text{on }  \{y_n=0\},
		\end{aligned}\right.
	\end{equation*}
where
\begin{equation*}
	\widetilde{f}(y,s)\coloneqq r^{1-\gamma -\bar\alpha} f(x,t)  \quad \text{and} \quad  \widetilde{g}(y,s)\coloneqq \frac{g(x,t)-a_kx_n}{r^{1+\bar\alpha}}.
\end{equation*}
Then $\|\widetilde{f}\|_{L^{\infty}(Q_1^+)} = r^{1-\gamma -\bar\alpha} \|f\|_{L^{\infty}(Q_r^+)}\leq \delta$ and by the induction hypothesis, we obtain $\|v\|_{L^{\infty}(Q_1^+)} \leq 1$. Moreover, from \eqref{c1b_g}, we have 
\begin{equation*}
	\hskip 1.2cm |\widetilde{g}(y,s)|=\frac{|g(ry,r^{2-\gamma}s)|}{r^{1+\bar\alpha}} \leq \frac{2^{-1-\beta}}{r^{1+\bar\alpha}}\cdot \delta \cdot (2r)^{1+\beta} \leq \delta \quad \text{for all } (y,s) \in \{y_n=0\}.
\end{equation*}
Therefore, since $v$ satisfies the assumptions of \Cref{lem:first_step}, there exists a constant $\tilde{a}\in\mathbb{R}$ such that 
\begin{equation}\label{eq:induc2_bd}
	|\tilde{a}| \leq C_1
\end{equation}
and
\begin{equation}\label{eq:induc1_bd}
	\|v-\tilde{a} y_n\|_{L^{\infty}(Q_{\eta}^+)} \leq \eta^{2-\gamma} \leq \eta^{1+\bar\alpha}.
\end{equation}
Setting $a_{k+1} = a_k + r^{\bar\alpha} \tilde{a}$, it follows from \eqref{eq:induc2_bd} that
\begin{align*}
	|a_{k+1}-a_k| =r^{\bar\alpha} |\tilde{a}| \leq  C_1 \eta^{k\bar\alpha}.
\end{align*}
Furthermore, by  \eqref{eq:induc1_bd}, we obtain 
\begin{align*}
	|u(x,t)-a_{k+1}x_n| &=|u(x,t)-a_kx_n-r^{\bar\alpha}\tilde{a}x_n | \\
	&\leq r^{1+\bar\alpha} \left|v\left(\frac{x}{r}, \frac{t}{r^{2-\gamma}}\right)-\tilde{a} \cdot \frac{x_n}{r}\right| \\
	&\leq \eta^{(k+1)(1+\bar\alpha)}
\end{align*}
for all $(x,t) \in  Q_{\eta^{k+1}}^+$, so we conclude that \eqref{induction} also holds for $k+1$.
	\end{enumerate}
Therefore, by induction, \eqref{induction}  holds for all $k\ge 0$.
\end{proof}
From \Cref{lem:first_step} and \Cref{lem:next_step}, together with an appropriate reduction scheme, we obtain the following theorem on optimal regularity.
\begin{theorem}[Optimal boundary $C^{1, \bar\alpha}$-estimates]  \label{thm:point_c1a}
	Let $(x_0,t_0) \in S_{1/2}$ and $\beta \in (0,1)$. Assume that $u$ is a viscosity solution to \eqref{eq:prob} satisfying $f\in C(Q_1^+) \cap L^{\infty}(Q_1^+)$ and $g\in C^{1,\beta}(x_0,t_0)$. Then $u \in C^{1, \bar\alpha}(x_0,t_0)$ and there exists a linear function $L$ such that
	\begin{equation*}
		|u(x,t)-L(x)|\leq CN(|x-x_0|+|t-t_0|^{\frac{1}{2-\gamma}})^{1+\bar\alpha} \quad \text{for all }(x,t) \in Q_1^+ \cap Q_1^+(x_0,t_0)
	\end{equation*}
	and 
	\begin{equation*}
		|Du(x_0,t_0)| \leq CN,
	\end{equation*}
	where 
	\begin{equation*}
		N\coloneqq \|u\|_{L^{\infty}(Q_1^+)}+\|f\|_{L^{\infty}(Q_1^+)}+ \|g\|_{C^{1,\beta}(x_0,t_0)}
	\end{equation*}
	and $C>0$ is a constant depending only on $n$, $\beta$ and $\gamma$.
\end{theorem}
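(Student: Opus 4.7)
The plan is to reduce this theorem to a direct application of \Cref{lem:next_step}, via a translation, a subtraction of the affine Taylor polynomial of $g$ at $(x_0,t_0)$, and a parabolic-type rescaling adapted to \eqref{eq:main}. After translating so that $(x_0,t_0)=(0,0)$, the hypothesis $g \in C^{1,\beta}(0,0)$ produces a polynomial $P(x',t)$ of degree at most one, with $\|P\|\leq CN$, such that
\begin{equation*}
|g(x',t)-P(x',t)|\leq N(|x'|+|t|^{1/(2-\gamma)})^{1+\beta} \quad \text{on } S_1\cap Q_{r_0}
\end{equation*}
for some radius $r_0>0$. Extending $P$ trivially to $Q_1$ (independent of $x_n$) and setting $\tilde u := u - P$, one checks that $\tilde u$ solves $\tilde u_t - x_n^\gamma \Delta \tilde u = f - P_t$ in $Q_1^+$, whose right-hand side has $L^\infty$-norm bounded by $CN$, and with boundary data $\tilde g := g-P$ possessing the desired quantitative decay at the origin.

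Next, for $\rho\in(0,r_0)$ small and a normalization $K = C_0 N$ with $C_0$ sufficiently large (both depending only on $n$, $\beta$, $\gamma$), the function $v(y,s):=K^{-1}\tilde u(\rho y,\rho^{2-\gamma}s)$ satisfies the three hypotheses of \Cref{lem:next_step} on $Q_1^+$. Applying that lemma produces a sequence $\{a_k\}$ with $|a_{k+1}-a_k|\leq C_1\eta^{k\bar\alpha}$ and $\|v-a_k y_n\|_{L^\infty(Q_{\eta^k}^+)}\leq\eta^{k(1+\bar\alpha)}$. Since $\bar\alpha>0$, the sequence is Cauchy and converges to some $a_\infty$ with $|a_\infty|\leq C$; interpolating between dyadic scales yields
\begin{equation*}
|v(y,s)-a_\infty y_n|\leq C(|y|+|s|^{1/(2-\gamma)})^{1+\bar\alpha} \quad \text{for } (y,s)\in Q_1^+.
\end{equation*}
Undoing the rescaling and defining $L(x):=P(x',0)+(K a_\infty/\rho)\,x_n$ gives the claimed pointwise bound for $u$, once the remaining linear-in-$t$ component $P(x',t)-P(x',0)$ is absorbed into the error. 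This absorption is possible precisely because $\bar\alpha\leq 1-\gamma$, which forces $(1+\bar\alpha)/(2-\gamma)\leq 1$ and hence $|t|\leq (|t|^{1/(2-\gamma)})^{1+\bar\alpha}$ for $|t|\leq 1$. The gradient bound $|Du(0,0)|\leq CN$ follows immediately since both $D_{x'}P(0,0)$ and $Ka_\infty/\rho$ are controlled by $CN$.

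The main technical step will be the bookkeeping in the rescaling: one must verify simultaneously that $\|v\|_{L^\infty(Q_1^+)}\leq 1$, that the rescaled right-hand side satisfies $\|\cdot\|_{L^\infty(Q_1^+)}\leq \delta$, and that the rescaled boundary data satisfies the sharp pointwise decay bound \eqref{c1b_g} demanded by \Cref{lem:next_step}. The factors $\rho^{2-\gamma}$ and $\rho^{1+\beta}$ appearing in front of $\|f\|_{L^\infty}$ and of $[g]_{C^{1,\beta}(0,0)}$ after the change of variables exactly match the homogeneity required for this to work; this is where the degeneracy exponent $\gamma$ in $x_n^\gamma$ is compatible with the parabolic scaling $t\sim r^{2-\gamma}$ underlying both the definition of $Q_r$ and the metric $d$ used in \Cref{bd_holder}.
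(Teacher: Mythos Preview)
Your approach is correct and essentially the same as the paper's: subtract the affine jet of $g$ at the boundary point, normalize so that \Cref{lem:next_step} applies, then pass to the limit of the resulting Cauchy sequence $\{a_k\}$ to produce $a_\infty$ and the linear function $L$. Two minor simplifications are available relative to what you wrote: under the paper's parabolic degree convention (where $t$ carries weight $2$), the polynomial $P$ coming from $g\in C^{1,\beta}(0,0)$ has no $t$-term, so $P_t=0$ and your absorption step for $P(x',t)-P(x',0)$ is unnecessary; and the paper dispenses with the spatial rescaling by $\rho$, using only division by a constant $N_\delta$, since the pointwise $C^{1,\beta}$ estimate on $g$ is taken over all of $S_1$.
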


\begin{proof}
Without loss of generality, we may assume that $(x_0,t_0)$ is the origin. First, let us show that the assumptions of \Cref{lem:next_step} hold through an appropriate reduction argument.

Since $g\in C^{1,\beta}(0,0)$, there exists a linear function $L_0(x)$ such that the following estimate holds:
\begin{equation*}
	|g_0(x,t)| \leq \|g\|_{C^{1,\beta}(0,0)} (|x|+|t|^\frac{1}{2-\gamma})^{1+\beta} \quad \text{for all }(x,t) \in S_1,
\end{equation*}
where $g_0 \coloneqq g-L_0$. Let $v(x,t)=u(x,t)-L_0(x)$, then $v$ is a viscosity solution to 
\begin{equation*} 
	\left\{\begin{aligned}
		v_t - x_n^\gamma \Delta v &= f  && \text{in } Q_1^+  \\
		v&= g_0 && \text{on }S_1.
	\end{aligned}\right.
\end{equation*}	
For 
\begin{equation*}
	N_\delta \coloneqq  \|v\|_{L^{\infty}(Q_1^+)}+ \delta^{-1}\|f\|_{L^{\infty}(Q_1^+)} + 2^{1+\beta}\delta^{-1}\|g\|_{C^{1,\beta}(0,0)},
\end{equation*}
let us consider the following functions:
\begin{equation*}
	\tilde{u}(x,t) \coloneqq \frac{v(x,t)}{N_\delta}, \quad 
	\tilde{f}(x,t) \coloneqq \frac{f(x,t)}{N_\delta} \quad \text{and} \quad 
	\tilde{g}(x,t) \coloneqq \frac{g_0(x,t)}{N_\delta}.
\end{equation*}
Then we have 
\begin{equation*}
	\|\tilde{u}\|_{L^{\infty}(Q_1^+)} \leq 1, \quad 
	\|\tilde{f}\|_{L^{\infty}(Q_1^+)} \leq \delta  
\end{equation*}
and 
\begin{equation*} 
	|\tilde{g}(x,t)| \leq 2^{-1-\beta} \delta  (|x|+|t|^{\frac{1}{2-\gamma}})^{1+\beta} \quad \text{for all }  (x,t) \in S_1.
\end{equation*}
That is, $\tilde{u}$, $\tilde{f}$ and $\tilde{g}$ satisfy the assumptions of \Cref{lem:next_step}, so there exists $\{a_k \mid k\ge -1\}$ satisfying \eqref{induction} for $\tilde{u}$. From the first recurrence relation in \eqref{induction}, the following inequalities hold for $i \ge j$: 
\begin{align*}
	|a_i-a_j| &\leq |a_i - a_{i-1}| + \cdots + |a_{j+1}-a_j| \\
	&\leq C_1 \frac{\eta^{j\bar\alpha}(1-\eta^{(i-j)\bar\alpha})}{1-\eta^{\bar\alpha}} \\
	&\leq \frac{C_1}{1-\eta^{\bar\alpha}} \eta^{j\bar\alpha}.
\end{align*}
Thus, $\{a_k \mid k\ge -1\}$ is a Cauchy sequence, and its limit $a_\infty =\lim_{k\to \infty} a_k$ exists satisfying
\begin{equation*}
	|a_\infty-a_j| \leq \frac{C_1}{1-\eta^{\bar\alpha}} \eta^{j\bar\alpha}.
\end{equation*}

For each $(x,t) \in Q_1^+$, choosing $j \ge 0$ such that  $\eta^{j+1} \leq \max\{ |x|, |t|^{\frac{1}{2-\gamma}} \}< \eta^j$ holds, we obtain 
\begin{align*}
	|\tilde{u}(x,t)-a_{\infty}x_n| &\leq |u(x,t)-a_j x_n|+|a_j -a_{\infty}| x_n \\
	&< \eta^{j(1+\bar\alpha)} + \frac{C_1}{1-\eta^{\bar\alpha}} \eta^{j (1+\bar\alpha)}  \\
	&=\frac{1}{\eta^{1+\bar\alpha}} \left(1 + \frac{C_1}{1-\eta^{\bar\alpha}} \right)\eta^{(j+1) (1+\bar\alpha)} \\
	&\leq\frac{1}{\eta^{1+\bar\alpha}} \left(1 + \frac{C_1}{1-\eta^{\bar\alpha}} \right)(|x|+|t|^{\frac{1}{2-\gamma}})^{1+\bar\alpha} ,
\end{align*}
which implies that $\tilde{u} \in C^{1,\bar\alpha}(0,0)$.

Finally, reverting the reduction to $u$, we conclude that 
\begin{equation*}
		|u(x,t)-L(x)|\leq CN(|x|+|t|^{\frac{1}{2-\gamma}})^{1+\bar\alpha} \quad \text{for all }(x,t) \in Q_1^+
\end{equation*}
and 
\begin{equation*}
		|Du(0,0)| \leq |a_\infty| N + |DL_0| \leq CN.
\end{equation*}
\end{proof}

For any $\Omega \subset \joinrel \subset Q_1^+$, \eqref{eq:main} becomes a uniformly parabolic equation. In general, uniformly parabolic equations are expected to exhibit higher regularity than degenerate equations. However, without additional assumptions such as H\"older continuity of $f$, the interior regularity of strong solutions to \eqref{eq:main} in $\Omega$ is at most $C_{\textnormal{loc}}^{1,\alpha}$, we refer to \cite[Chapter VII]{Lie96} for the notion of a strong solution. The following lemma states that such $\alpha$  can be arbitrarily close to 1.
\begin{lemma} [Interior $C^{1, 1^-}$-estimates] \label{lem:int_C11-}
	Suppose that $f \in L^\infty(Q_1^+) \cap C(Q_1^+)$ and let $u\in W^{2,p}_{\textnormal{loc}}(Q_1^+) \cap L^p(Q_1^+)$ be a strong solution to \eqref{eq:main} in $Q_1^+$. Then $u \in C_{\textnormal{loc}}^{1,1-\frac{n}{p}}(Q_1^+)$ for all $p>n$ with the uniform estimate
\begin{equation*}
	\|u\|_{C^{1,1-\frac{n}{p}}(\overline{\Omega})} \leq C (\|u\|_{L^{\infty}(Q_1^+)} + \|f\|_{L^{\infty}(Q_1^+)}),
\end{equation*}
where $\Omega \subset \joinrel \subset Q_1^+$ and $C>0$ is a constant depending only on $n$, $\gamma$ and $\Omega$.
\end{lemma}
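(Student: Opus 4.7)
The plan is to reduce to the classical regularity theory for uniformly parabolic equations by exploiting compactness: the degeneracy of $x_n^\gamma$ occurs only on $S_1$, so on any $\Omega \subset\joinrel\subset Q_1^+$ the coefficient is smooth and bounded between two positive constants. Concretely, I would fix an intermediate open set $\Omega'$ with $\Omega \subset\joinrel\subset \Omega' \subset\joinrel\subset Q_1^+$ and a constant $c_\Omega > 0$ such that $x_n \geq c_\Omega$ on $\Omega'$; dividing \eqref{eq:main} through by $x_n^\gamma$ then produces the equivalent equation
\begin{equation*}
	x_n^{-\gamma} u_t - \Delta u = x_n^{-\gamma} f \quad \text{in } \Omega',
\end{equation*}
whose leading coefficient $x_n^{-\gamma}$ belongs to $C^\infty(\overline{\Omega'})$ and is bounded between two positive constants, and whose right-hand side lies in $L^\infty(\Omega')$.

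Next I would invoke the standard interior parabolic Calder\'on--Zygmund theory for strong solutions of uniformly parabolic equations (see, e.g., \cite[Chapter VII]{Lie96}), which yields, for every $1 < p < \infty$, the estimate
\begin{equation*}
	\|u_t\|_{L^p(\Omega)} + \|D^2 u\|_{L^p(\Omega)} \leq C \bigl( \|u\|_{L^p(\Omega')} + \|f\|_{L^\infty(\Omega')} \bigr),
\end{equation*}
with $C$ depending on $n$, $\gamma$, $\Omega$ and $\Omega'$. For $p > n$, the spatial Morrey embedding $W^{2,p} \hookrightarrow C^{1, 1 - n/p}$ applied on each time slice, combined with the pointwise control of $u_t$ coming back from the equation, would then upgrade this $W^{2,p}$-bound to the claimed $C^{1, 1-n/p}$-estimate on $\Omega$. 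A standard covering argument by small cylinders allows one to absorb the auxiliary set $\Omega'$ into a constant depending only on $n$, $\gamma$ and $\Omega$.

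The main conceptual point---which is essentially the entire content---is that the degeneracy of $x_n^\gamma$ is confined to $S_1$ and is therefore harmless away from it, so the lemma reduces to a black-box application of classical parabolic regularity. I anticipate no genuine obstacle; this auxiliary estimate is needed only to be paired with the boundary estimate of \Cref{thm:point_c1a} in order to glue boundary and interior controls when proving the global \Cref{thm:main}.
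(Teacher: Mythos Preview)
Your proposal is correct and follows essentially the same route as the paper: observe that on any $\Omega\subset\joinrel\subset Q_1^+$ the equation is uniformly parabolic, invoke interior parabolic $W^{2,p}$-estimates, and then apply Sobolev embedding. The paper's proof is the two-line version of exactly this argument, without the explicit division by $x_n^\gamma$ or the intermediate set $\Omega'$.
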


\begin{proof}
Since  \eqref{eq:main} is uniformly parabolic in $\Omega$, it follows from the interior $W^{2,p}$-estimate for uniformly parabolic equations that 
\begin{equation*}
	\|D^2 u\|_{L^p(\Omega)} +\|u_t\|_{L^p(\Omega)} \leq C (\|u\|_{L^{\infty}(Q_1^+)} + \|f\|_{L^{\infty}(Q_1^+)}) \quad \text{for all }p>1. 
\end{equation*}
Therefore, by Sobolev embedding theorem, we conclude that
\begin{equation*}
	\|u\|_{C^{1,1-\frac{n}{p}}(\overline{\Omega})} \leq C \|u\|_{W^{2,p}(\Omega)} \leq C (\|u\|_{L^{\infty}(Q_1^+)} + \|f\|_{L^{\infty}(Q_1^+)}) \quad \text{for all } p >n .
\end{equation*}
\end{proof}

We are now ready to prove the first main theorem, \Cref{thm:main}.
\begin{proof} [Proof of \Cref{thm:main}]
For each $(x_0,t_0) \in \overline{Q_{1/2}^+}$, it suffices to find a linear function $L$ satisfying 
\begin{equation*}
	|L(x_0)| + |DL| \leq CN
\end{equation*}
and
\begin{equation*}
	|u(x,t)-L(x)| \leq CN(|x-x_0| + \sqrt{|t-t_0|})^{1+\bar\alpha} \quad  \text{for all }(x,t) \in Q_1^+ \cap Q_1(x_0,t_0).
\end{equation*}
where 
\begin{equation*}
	N\coloneqq \|u\|_{L^{\infty}(Q_1^+)}+\|f\|_{L^{\infty}(Q_1^+)}+ \|g\|_{C^{1,\beta}(\overline{Q_1^+})}
\end{equation*}
and $C>0$ is a constant depending only on $n$, $\beta$ and $\gamma$.

If $(x_0,t_0) \in S_{1/2}$,  the polynomial we are looking for is given in \Cref{thm:point_c1a}.  Next, let us assume that $(x_0,t_0)$ is an interior point and let $\bar{x}_0= (x_0',0)$. Then, by \Cref{thm:point_c1a}, there exist a linear function $L_1$ such that
\begin{equation*}
	|L_1(\bar{x}_0)|+|DL_1| \leq CN
\end{equation*}
and 
\begin{align}
	|u(x,t)-L_1(x)| &\leq CN(|x-\bar{x}_0|+ |t-t_0|^{\frac{1}{2-\gamma}} )^{1+\bar\alpha}  \nonumber \\
	&\leq CN(|x-\bar{x}_0|+ \sqrt{|t-t_0|} )^{1+\bar\alpha}  \label{u-l1}
\end{align}
for all $(x,t) \in Q_1^+\cap Q_1^+(\bar{x}_0,t_0)$ , where $C>0$ is a constant depending only on $n$, $\beta$ and $\gamma$.

Let $r$ be the $n$-th compoent of $x_0$. Then $v(x,t)\coloneqq u(x,t)-L_1(x)$ is a solution to the following uniformly parabolic equation: 
\begin{equation*}
	v_t - x_n^\gamma \Delta v = f \quad \text{in } \mathcal{Q}_{3r/4}(x_0,t_0).
\end{equation*}
 By \Cref{lem:int_C11-}, there exists a linear function $L_2$ such that 
\begin{equation} \label{norm_rl2}
	r |DL_2|  \leq C( \|v\|_{L^{\infty}(\mathcal{Q}_{3r/4}(x_0,t_0))}  + r^2 \|f\|_{L^\infty(Q_1^+)})
\end{equation}
and
\begin{equation} \label{v-ltwo} 
	|v(x,t)-L_2(x)|  \leq C \bigg( \frac{ \|v\|_{L^{\infty}(\mathcal{Q}_{4r/3}(x_0,t_0))}}{r^{1+\bar\alpha}} + r^{1-\bar\alpha} \|f\|_{L^\infty(Q_1^+)}  \bigg) (|x-x_0| + \sqrt{|t-t_0|})^{1+\bar\alpha} 
\end{equation}
for all $(x,t) \in \mathcal{Q}_{r/2} (x_0,t_0)$, where $C>0$ is a constant depending only on $n$ and $\gamma$.
From \eqref{u-l1}, we obtain
\begin{align} 
	|v(x,t)| &\leq CN (|x-\bar{x}_0|+\sqrt{|t-t_0|} )^{1+\bar\alpha} \nonumber \\
	&\leq CN(|x-x_0|+\sqrt{|t-t_0|} +r \big)^{1+\bar\alpha} \nonumber \\
	& \leq CN r^{1+\bar\alpha} \quad \text{for all }(x,t) \in \mathcal{Q}_{3r/4}(x_0,t_0). \label{bdd_v}
\end{align}
From \eqref{norm_rl2} and \eqref{bdd_v}, we have  
\begin{equation} \label{norm_l2}  	
	|L_2(x_0)|+ r |DL_2|  \leq CN r^{1+\bar\alpha}.
\end{equation}
Moreover, combining \eqref{v-ltwo} and \eqref{bdd_v} gives  
\begin{equation} \label{final1}
	|u(x,t)-L_1(x)-L_2(x)| \leq CN (|x-x_0| + \sqrt{|t-t_0|})^{1+\bar\alpha} \quad \text{for all } (x,t) \in  \mathcal{Q}_{r/2}(x_0,t_0)
\end{equation}
and combining \eqref{u-l1} and \eqref{norm_l2} gives
\begin{align}
	|u(x,t)-L_1(x)-L_2(x)| &\leq |u(x,t)-L_1(x)| + |L_2(x)| \nonumber\\
	&\leq CN(|x-\bar{x}_0|+ \sqrt{|t-t_0|} )^{1+\bar\alpha} +|L_2(x_0)|  +|DL_2| |x-x_0|  \nonumber\\
	&\leq CN(|x-\bar{x}_0|+ \sqrt{|t-t_0|} )^{1+\bar\alpha}  + CNr^{1+\bar\alpha}+ CNr^{\bar\alpha} |x-x_0| \nonumber\\
	&\leq CN(|x-x_0|+ \sqrt{|t-t_0|} )^{1+\bar\alpha} \label{final2}
\end{align}
for all $(x,t) \in( Q_1^+\cap Q_1^+(x_0,t_0)\cap Q_1^+(\bar{x}_0,t_0))\setminus \mathcal{Q}_{r/2}(x_0,t_0)$. Since $|x-x_0|+ \sqrt{|t-t_0|} > r/2$ in \eqref{final2}, the estimate \eqref{final2} can be extended up to $(Q_1^+\cap Q_1^+(x_0,t_0))\setminus \mathcal{Q}_{r/2}(x_0,t_0)$ by enlarging the constant $C$.

Therefore, letting $L = L_1+L_2$ and combining \eqref{final1} and \eqref{final2}, we obtain the desired conclusion.
\end{proof}

\subsection{Optimal regularity for homogeneous equations}  \label{sec:OR_hom}
In this subsection, we address the optimal regularity of solutions to \eqref{prob_xf}. The $C^{2,\alpha}$-regularity of solutions to the homogeneous equation played a crucial role in establishing the optimal $C^{1,1-\gamma}$-regularity of solutions to the inhomogeneous equation. However, in the case of the homogeneous equation, this approach is no longer applicable. We address this issue through time derivative estimates.
\begin{lemma} \label{lem:hol_ut/x_g}
Suppose  $g \in C^{2,\beta}(\overline{S_1})$ satisfying \eqref{cond_gt} for some $\beta  \in (0,1)$. Let $u$ be a viscosity solution to \eqref{prob_xf}. Then $x_n^{-\gamma} u_t \in C^{1-\gamma}(\overline{Q_{1/2}^+})$ with the uniform estimate
\begin{equation*}
	\|x_n^{-\gamma}u_t\|_{C^{1-\gamma}(\overline{Q_{1/2}^+})} \leq C( \|u\|_{L^{\infty}(Q_1^+)} + \|g\|_{C^{2,\beta}(\overline{S_1})}) ,
\end{equation*}
where $C>0$ is a constant depending only on $n$, $\beta$ and $\gamma$.
\end{lemma}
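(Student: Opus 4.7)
\emph{Plan.} The assumption $g_t = 0$ on $S_1$ means the Dirichlet data is time-independent, so the time difference quotient $v_h(x, t) := h^{-1}[u(x, t+h) - u(x, t)]$ solves the same homogeneous PDE with zero Dirichlet data on the flat boundary. The first step is to use the ``moreover'' part of \Cref{thm:LY25_c2a} (after subtracting a smooth extension of $g$ so the problem has zero boundary data) to derive a uniform $L^\infty$-bound on $\{v_h\}$, and then invoke the stability theorem \Cref{thm-stability} to conclude that $u_t$ is itself a viscosity solution of \eqref{eq:f=g=0} on a slightly smaller cylinder, with $\|u_t\|_{L^\infty} \leq C(\|u\|_{L^\infty} + \|g\|_{C^{2,\beta}})$. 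This reduces the problem to analyzing $w := x_n^{-\gamma} u_t$ where $u_t$ has zero Dirichlet data.

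Next, I would apply the first estimate of \Cref{thm:LY25_c2a} to $u_t$ at each boundary point $(x_0, t_0) \in S_{1/2}$, which yields a polynomial $P_{x_0, t_0}(x) = c_0 x_n + \sum_{i < n} c_i (x_i - (x_0)_i) x_n$ (the $x_n^2$ coefficient vanishes from $\Delta P = 0$) satisfying $|u_t(x, t) - P_{x_0, t_0}(x)| \leq C \|u_t\|_{L^\infty} |x - x_0|^{1+\sigma} x_n$. Dividing by $x_n^\gamma$ gives $|w(x, t) - x_n^{1-\gamma} L_{x_0, t_0}(x)| \leq C \|u_t\|_{L^\infty} |x - x_0|^{1+\sigma} x_n^{1-\gamma}$ with $L_{x_0, t_0}$ affine, and the factor $x_n^{1-\gamma}$ provides the local $C^{1-\gamma}$ structure of $w$ near the boundary.

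The spatial $C^{1-\gamma}$-estimate between two points $(x, t), (y, t) \in Q_{1/2}^+$ (with $x_n \leq y_n$) is obtained by case analysis on how $|x - y|$ compares to $x_n$ and $y_n$. When both points are close to $S$ relative to $|x - y|$, the expansion centered at $(x', 0, t)$ directly controls the difference; when $y$ sits deeper (i.e., $y_n > |x - y|$), I introduce the intermediate point $z := (x', y_n)$, handle $|w(x, t) - w(z, t)|$ by the boundary expansion, and handle $|w(z, t) - w(y, t)|$ by an interior gradient estimate obtained from parabolic scaling in the ball $B_{y_n/4}(y)$ (where the equation is uniformly parabolic with coefficient $\sim y_n^\gamma$) combined with the boundary Lipschitz bound $|u_t| \leq C \|u_t\|_{L^\infty} x_n$ from \Cref{thm:LY25_lip}, giving $|Dw| \leq C \|u_t\|_{L^\infty} x_n^{-\gamma}$ in the interior. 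For time regularity, the ``moreover'' part of \Cref{thm:LY25_c2a} applied to $u_t$ yields $|u_{tt}| \leq C \|u_t\|_{L^\infty} x_n$ near the boundary, hence $|w_t| \leq C \|u_t\|_{L^\infty} x_n^{1-\gamma}$ is bounded on $Q_{1/2}^+$ and $w$ is Lipschitz in $t$. Combining the two gives $w \in C^{1-\gamma}(\overline{Q_{1/2}^+})$ in the parabolic metric.

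The main technical obstacle will be the spatial case analysis: the expansion remainders $x_n^{2+\sigma-\gamma}$ and $(|x-y| + x_n)^{1+\sigma} y_n^{1-\gamma}$ must be carefully matched against the target bound $C|x - y|^{1-\gamma}$, and the transition regime (where $|x - y| \sim x_n$ or $|x - y| \sim y_n$) requires the boundary expansion and the interior scaling argument to meet cleanly at the intermediate point $z$, which is the key geometric idea that glues the two regimes together.
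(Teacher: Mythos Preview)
Your plan is over-engineered, and the step where you propose to control $|w(x,t)-w(z,t)|$ via the boundary $C^{2,\alpha}$ expansion of \Cref{thm:LY25_c2a} has a genuine gap. In your Case~B ($y_n>|x-y|$) there is no lower bound forcing $x_n$ to be small: for instance, if $x_n=y_n=0.1$ and $|x-y|=10^{-6}$, both points lie well inside the domain. The expansion of $u_t$ at $(x',0,t)$ gives $|w(p,t)-c_0\,p_n^{1-\gamma}|\le C p_n^{2+\sigma-\gamma}$, so the remainder at $z=(x',y_n)$ is of order $y_n^{2+\sigma-\gamma}$, which is \emph{not} dominated by $|x-y|^{1-\gamma}$ in this regime. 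The remainders $x_n^{2+\sigma-\gamma}$ and $y_n^{2+\sigma-\gamma}$ do not cancel (you only have pointwise control on the remainder, not oscillation control), so the boundary expansion by itself cannot close this case. The fix is easy---in the ``both points deep'' regime use the interior gradient bound you already derived, $|Dw|\le C p_n^{-\gamma}$, along the direct segment from $x$ to $y$---but as written the case split does not cover it.

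More importantly, the paper's proof shows that the full $C^{2,\alpha}$ expansion is unnecessary: the single pointwise bound $|u_t|\le C x_n$ from \Cref{thm:LY25_lip} already suffices. Once you know $u_t$ solves \eqref{eq:f=g=0} (the paper gets this directly from interior smoothness and $g_t=0$, bypassing your difference-quotient/stability step), the Lipschitz bound gives $|w|\le C x_n^{1-\gamma}$. The paper then splits into exactly two cases on the intrinsic parabolic distance $d:=|x-y|+|t-s|^{1/(2-\gamma)}$: if $d\le y_n/2$, both points sit in a region where the equation is uniformly parabolic at scale $y_n$, and a rescaled interior H\"older estimate for $v=x_n^{-\gamma}u_t$ (with $\|v\|_{L^\infty}\le C y_n^{1-\gamma}$) gives the bound; if $d>y_n/2$, the crude triangle inequality $|w(x,t)-w(y,s)|\le C(x_n^{1-\gamma}+y_n^{1-\gamma})\le C d^{1-\gamma}$ already suffices. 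No polynomial expansion, no intermediate point, no separate treatment of the time variable.
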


\begin{proof}
Let $(x,t), (y,s) \in Q_{1/2}^+$. Without loss of generality, we may assume that $x_n \leq y_n\eqqcolon r$.  Since the equation in \eqref{prob_xf} is uniformly parabolic in any $\Omega \subset \joinrel \subset Q_1^+$, $u \in C_{\textnormal{loc}}^{\infty}(Q_1^+)$. Moreover, it follows from \eqref{cond_gt} that $u_t$ is a solution to \eqref{eq:f=g=0}.
Then, combining \Cref{rmk_ut_bdd}, \Cref{thm:LY25_lip} and \cite[Theorem 1.5 and Theorem 1.7]{LY25} gives
\begin{equation} \label{time_lip}
\begin{aligned}
	|u_t(x,t)|  &\leq C \|u_t\|_{L^\infty(Q_{3/4}^+)}  x_n \\
	&\leq C \|u\|_{C^2(\overline{Q_{3/4}^+})}  x_n \\
	&\leq C(\|u\|_{L^\infty(Q_1^+)} +\|g\|_{C^{2,\beta}(\overline{S_1})}) x_n
\end{aligned}
\end{equation}
for all $(x,t) \in \overline{Q_{1/2}^+}$. This implies that 
\begin{equation} \label{p_holder}
	|x_n^{-\gamma}u_t(x,t)| \leq   C(\|u\|_{L^\infty(Q_1^+)} + \|g\|_{C^{2,\beta}(\overline{S_1})})  (|x|+|t|^{\frac{1}{2-\gamma}})^{1-\gamma}   \quad \text{for all } (x,t) \in \overline{Q_{1/2}^+}.
\end{equation}

\noindent {\textbf{Case 1: $|x-y|+|t-s|^{\frac{1}{2-\gamma}}\leq r/2.$}}

Let $v(x,t)=x_n^{-\gamma}u_t(x,t)$. From \eqref{p_holder}, we have 
\begin{equation*}
	 \|v\|_{L^\infty(Q_r(y,s))} \leq C(\|u\|_{L^\infty(Q_1^+)} + \|g\|_{C^{2,\beta}(\overline{S_1})}) r^{1-\gamma}.
\end{equation*}
Furthermore, by interior H\"older estimates for uniformly parabolic equations, we have 
\begin{equation} \label{lip1}
\begin{aligned}
	|x_n^{-\gamma}u_t(x,t) - y_n^{-\gamma}u_t(y,s)| &\leq \frac{C \|v\|_{L^\infty(Q_r(y,s))}}{r^{1-\gamma}} (|x-y|+\sqrt{|t-s|})^{1-\gamma} \\
	&\leq C(\|u\|_{L^\infty(Q_1^+)} + \|g\|_{C^{2,\beta}(\overline{S_1})}) (|x-y|+\sqrt{|t-s|})^{1-\gamma}.
\end{aligned}
\end{equation}

\noindent {\textbf{Case 2: $|x-y|+|t-s|^{\frac{1}{2-\gamma}} > r/2.$}}

From \eqref{time_lip}, we obtain
\begin{align}
	|x_n^{-\gamma}u_t(x,t)-y_n^{-\gamma}u_t(y,s)|&\leq |x_n^{-\gamma}u_t(x,t)| + |y_n^{-\gamma}u_t(y,s)| \nonumber \\
	&\leq C (\|u\|_{L^\infty(Q_1^+)} +\|g\|_{C^{2,\beta}(\overline{S_1})}) (x_n^{1-\gamma}+y_n^{1-\gamma}) \nonumber\\
	&\leq C(\|u\|_{L^\infty(Q_1^+)} + \|g\|_{C^{2,\beta}(\overline{S_1})})  r^{1-\gamma} \nonumber\\
	&\leq C(\|u\|_{L^\infty(Q_1^+)} + \|g\|_{C^{2,\beta}(\overline{S_1})}) (|x-y|+|t-s|^{\frac{1}{2-\gamma}})^{1-\gamma}\nonumber \\
	&\leq C(\|u\|_{L^\infty(Q_1^+)} + \|g\|_{C^{2,\beta}(\overline{S_1})}) (|x-y|+\sqrt{|t-s|})^{1-\gamma} .  \label{lip2}
\end{align}
Combining \eqref{lip1} and \eqref{lip2}, we obtain the conclusion. 
\end{proof}

Time derivative estimates \Cref{lem:hol_ut/x_g} transforms the homogeneous parabolic equation into an elliptic equation with a H\"older source term on a fixed time slice, which is the key idea in establishing the optimal regularity of solutions to \eqref{prob_xf}. Based on this idea, we present the proof of \Cref{thm:main2}.
\begin{proof} [Proof of \Cref{thm:main2}]
For fixed $t \in [-1/4,0]$, $v(x) \coloneqq u(x,t)$ is a solution to the following elliptic problem:
\begin{equation} \label{eq:elliptic}
\left\{\begin{aligned}
	\Delta v&= \tilde{f} && \text{in } B_1^+ \\
	v&=\tilde{g} && \text{on } S_1
\end{aligned}\right.
\end{equation}
where $\tilde{f} \coloneqq x_n^{-\gamma} u_t  (\cdot, t)$ and $\tilde{g} \coloneqq g (\cdot, t)$. By \Cref{lem:hol_ut/x_g}, we know that $\tilde{f}  \in  C^{1-\gamma}(\overline{B_{3/4}^+})$ and 
\begin{equation} \label{tilde_f_hol}
	\|\tilde{f}\|_{C^{1-\gamma}(\overline{B_{3/4}^+})} \leq C( \|u\|_{L^{\infty}(Q_1^+)} + \|g\|_{C^{2,\beta}(\overline{S_1})}).
\end{equation}
Thus, by \eqref{tilde_f_hol} and the Schauder estimates for elliptic equations, we have $v\in C^{2,\bar\alpha}(\overline{B_{1/2}^+})$ with the uniform estimate
\begin{align*}  
	\|u(\cdot,t)\|_{C^{2,\bar\alpha}(\overline{B_{1/2}^+})} &= \|v\|_{C^{2,\bar\alpha}(\overline{B_{1/2}^+})} \\
	&\leq C(\|v\|_{L^{\infty}(Q_1^+)}  + \|\tilde{f} \|_{C^{1-\gamma}(\overline{B_{3/4}^+})} + \|\tilde{g}\|_{C^{2,\beta}(\overline{S_1})} ) \\
	&\leq C (\|u\|_{L^{\infty}(Q_1^+)} + \|g\|_{C^{2,\beta}(\overline{S_1})}).
\end{align*}
\end{proof}

\Cref{cor:main2} follows from \Cref{thm:main2} and the following lemma.
\begin{lemma} \label{lem:hol_ut/x}
Let $\kappa \in\mathbb{N}_0^n$ and let $u$ be a viscosity solution to \eqref{eq:f=g=0}. Then $x_n^{-\gamma} D_{(x',t)}^\kappa u \in C^{1-\gamma}(\overline{Q_{1/2}^+})$ with the uniform estimate
\begin{equation*}
	\|x_n^{-\gamma} D_{(x',t)}^\kappa u\|_{C^{1-\gamma}(\overline{Q_{1/2}^+})} \leq C \|u\|_{L^{\infty}(Q_1^+)},
\end{equation*}
where $C>0$ is a constant depending only on $n$ and $\gamma$.
\end{lemma}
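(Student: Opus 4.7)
Write $w \coloneqq D_{(x',t)}^{\kappa} u$; the plan is to reduce the statement to the two-case argument used in the proof of \Cref{lem:hol_ut/x_g}. The key reduction is to verify that $w$ is itself a classical solution of $w_t - x_n^\gamma \Delta w = 0$ in $Q_{3/4}^+$ with $w = 0$ on $S_{3/4}$ and
\begin{equation*}
\|w\|_{L^{\infty}(Q_{3/4}^+)} \leq C\|u\|_{L^\infty(Q_1^+)},
\end{equation*}
where $C = C(n,\gamma,|\kappa|)$. I would proceed by induction on $|\kappa|$. The base case $\kappa = 0$ is exactly the argument of \Cref{lem:hol_ut/x_g} with $u$ in place of $u_t$, the only change being that the pointwise bound $|u(x,t)| \leq C\|u\|_{L^\infty(Q_1^+)} x_n$ is supplied directly by the boundary Lipschitz estimate \Cref{thm:LY25_lip}.

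For the inductive step, the crucial observation is that the operator $\partial_t - x_n^\gamma \Delta$ and the zero Dirichlet condition on $S_1$ are both invariant under translations in the $x'$ and $t$ variables. Hence, for any unit vector $e$ in an $x'$- or $t$-direction and small $h>0$, the difference quotients $(u(\cdot + he) - u(\cdot))/h$ are viscosity solutions of the same problem on a slightly smaller cylinder, and by interior smoothness (uniform parabolicity away from $S_1$) combined with the stability theorem (\Cref{thm-stability}) they converge to $\partial_e u$, which therefore solves the same Cauchy--Dirichlet problem. Iterating this $|\kappa|$ times yields that $w$ has the required structure. The $L^\infty$ bound is propagated along the induction by invoking the boundary $C^{2,\alpha}$ estimate (\Cref{thm:LY25_c2a}) at each step, which controls one further tangential derivative of a solution by its $L^\infty$ norm on a slightly larger cylinder; a standard dyadic chain of shrinking radii absorbs the finitely many loss-of-radius factors into a single constant depending on $|\kappa|$.

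With $w$ in hand, the conclusion follows by copying verbatim the proof of \Cref{lem:hol_ut/x_g}: \Cref{thm:LY25_lip} applied to $w$ yields $|w(x,t)| \leq C\|u\|_{L^\infty(Q_1^+)} x_n$ on $\overline{Q_{1/2}^+}$, hence the pointwise bound $|x_n^{-\gamma} w(x,t)| \leq C\|u\|_{L^\infty(Q_1^+)}(|x|+|t|^{1/(2-\gamma)})^{1-\gamma}$; the H\"older estimate is then obtained by splitting into the cases $|x-y|+|t-s|^{1/(2-\gamma)} \leq y_n/2$ (interior H\"older estimates on the uniformly parabolic cylinder $Q_{y_n}(y,s)$ after rescaling to unit size) and its complement (the pointwise bound just derived). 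I expect the main technical obstacle to lie in the bookkeeping of the inductive $L^\infty$ control, specifically in justifying the differentiation of viscosity solutions in tangential directions up to the flat boundary and in tracking the telescoping domain contractions; the remainder is a direct transcription of the argument in \Cref{lem:hol_ut/x_g}.
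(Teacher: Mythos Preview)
Your proposal is correct and follows essentially the same approach as the paper: both arguments observe that $w=D_{(x',t)}^\kappa u$ is again a solution of \eqref{eq:f=g=0} (the paper simply invokes interior smoothness $u\in C_{\textnormal{loc}}^\infty(Q_1^+)$ and differentiates, while you spell this out via difference quotients and \Cref{thm-stability}), then bootstrap \Cref{thm:LY25_c2a} to obtain $\|w\|_{L^\infty(Q_{3/4}^+)}\leq C\|u\|_{L^\infty(Q_1^+)}$, and finally transcribe the two-case argument of \Cref{lem:hol_ut/x_g} with $g=0$.
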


\begin{proof}
Since the equation in \eqref{eq:f=g=0} is uniformly parabolic in any $\Omega \subset \joinrel \subset Q_1^+$, $u \in C_{\textnormal{loc}}^{\infty}(Q_1^+)$. We now consider the function 
\begin{equation*}
	w(x,t) = D_{(x',t)}^\kappa u(x,t) \quad \text{for } (x,t) \in Q_1^+ \text{ and } \kappa \in \mathbb{N}_0^n.
\end{equation*}

The equation in \eqref{eq:f=g=0} is homogeneous, and since $u=0$ on $S_1$,  differentiating both sides of the equation in \eqref{eq:f=g=0}  with respect to the variables $x'$ and $t$ yields that the function $w$ is again a solution to \eqref{eq:f=g=0}.

Furthermore, it follows from the bootstrap argument for the estimates in \Cref{thm:LY25_c2a} that
\begin{equation} \label{bdd_w}
	\|w\|_{L^\infty(Q_{3/4}^+)} \leq C\|u\|_{L^\infty(Q_1^+)} .
\end{equation}
The remaining part of the proof is exactly the same as that of \Cref{lem:hol_ut/x_g} if we consider $x_n^{-\gamma}w$ and $g=0$ instead of $x_n^{-\gamma}u_t$ and $g$.
\end{proof}

\begin{proof} [Proof of \Cref{cor:main2}]
For fixed $t \in [-1/4,0]$ and $\kappa \in \mathbb{N}_0^n$, $w(x) \coloneqq D_{(x',t)}^\kappa u(x,t)$ is a solution to the following elliptic problem:
\begin{equation} \label{eq:elliptic}
\left\{\begin{aligned}
	\Delta w&= \tilde{f} && \text{in } B_{3/4}^+ \\
	w&= 0 && \text{on }S_1,
\end{aligned}\right.
\end{equation}
where $\tilde{f} \coloneqq  x_n^{-\gamma} D_{(x',t)}^{\kappa+e_n} u (\cdot, t)$.
It follows from \Cref{lem:hol_ut/x} that $\tilde{f} \in  C^{1-\gamma}(\overline{B_{3/4}^+})$ and 
\begin{equation} \label{hol_f_1-g}
	\|\tilde{f} \|_{C^{1-\gamma}(\overline{B_{3/4}^+})} \leq C \|u\|_{L^{\infty}(Q_1^+)},
\end{equation}
Thus, by \eqref{bdd_w},  \eqref{hol_f_1-g}, and the Schauder estimates for elliptic equations,  we have $v \in C^{2,1-\gamma}(\overline{B_{1/2}^+})$ with the uniform estimate
\begin{align*}  
	\|D_{(x',t)}^\kappa u(\cdot,t)\|_{C^{2,1-\gamma}(\overline{B_{1/2}^+})} & = \|w\|_{C^{2,1-\gamma}(\overline{B_{1/2}^+})}  \\
	&\leq C(\|w\|_{L^{\infty}(Q_{3/4}^+)}  + \|\tilde{f} \|_{C^{1-\gamma}(\overline{B_{3/4}^+})} ) \\
	&\leq C \|u\|_{L^{\infty}(Q_1^+)}.
\end{align*}
\end{proof}


%
%


\begin{thebibliography}{10}

\bibitem{ART15}
D.~a.~J. Ara\'ujo, G.~Ricarte, and E.~V. Teixeira.
\newblock Geometric gradient estimates for solutions to degenerate elliptic
  equations.
\newblock {\em Calc. Var. Partial Differential Equations}, 53(3-4):605--625,
  2015.

\bibitem{AS23}
D.~a.~J. Ara\'ujo and B.~Sirakov.
\newblock Sharp boundary and global regularity for degenerate fully nonlinear
  elliptic equations.
\newblock {\em J. Math. Pures Appl. (9)}, 169:138--154, 2023.

\bibitem{ATU17}
D.~a.~J. Ara\'ujo, E.~V. Teixeira, and J.~M. Urbano.
\newblock A proof of the {$C^{p'}$}-regularity conjecture in the plane.
\newblock {\em Adv. Math.}, 316:541--553, 2017.

\bibitem{ATU18}
D.~a.~J. Ara\'ujo, E.~V. Teixeira, and J.~M. Urbano.
\newblock Towards the {$C^{p'}$}-regularity conjecture in higher dimensions.
\newblock {\em Int. Math. Res. Not. IMRN}, (20):6481--6495, 2018.

\bibitem{CS07}
L.~Caffarelli and L.~Silvestre.
\newblock An extension problem related to the fractional {L}aplacian.
\newblock {\em Comm. Partial Differential Equations}, 32(7-9):1245--1260, 2007.

\bibitem{DH98}
P.~Daskalopoulos and R.~Hamilton.
\newblock Regularity of the free boundary for the porous medium equation.
\newblock {\em J. Amer. Math. Soc.}, 11(4):899--965, 1998.

\bibitem{DH99}
P.~Daskalopoulos and R.~Hamilton.
\newblock The free boundary in the {G}auss curvature flow with flat sides.
\newblock {\em J. Reine Angew. Math.}, 510:187--227, 1999.

\bibitem{DL03}
P.~Daskalopoulos and K.-A. Lee.
\newblock H\"older regularity of solutions of degenerate elliptic and parabolic
  equations.
\newblock {\em J. Funct. Anal.}, 201(2):341--379, 2003.

\bibitem{DS09}
P.~Daskalopoulos and O.~Savin.
\newblock On {M}onge-{A}mp\`ere equations with homogeneous right-hand sides.
\newblock {\em Comm. Pure Appl. Math.}, 62(5):639--676, 2009.

\bibitem{DP21}
H.~Dong and T.~Phan.
\newblock Parabolic and elliptic equations with singular or degenerate
  coefficients: the {D}irichlet problem.
\newblock {\em Trans. Amer. Math. Soc.}, 374(9):6611--6647, 2021.

\bibitem{DP23}
H.~Dong and T.~Phan.
\newblock On parabolic and elliptic equations with singular or degenerate
  coefficients.
\newblock {\em Indiana Univ. Math. J.}, 72(4):1461--1502, 2023.

\bibitem{DPT23}
H.~Dong, T.~Phan, and H.~V. Tran.
\newblock Degenerate linear parabolic equations in divergence form on the upper
  half space.
\newblock {\em Trans. Amer. Math. Soc.}, 376(6):4421--4451, 2023.

\bibitem{DPT24}
H.~Dong, T.~Phan, and H.~V. Tran.
\newblock Nondivergence form degenerate linear parabolic equations on the upper
  half space.
\newblock {\em J. Funct. Anal.}, 286(9):Paper No. 110374, 53, 2024.

\bibitem{DR24}
H.~Dong and J.~Ryu.
\newblock Sobolev estimates for parabolic and elliptic equations in divergence
  form with degenerate coefficients.
\newblock {\em arXiv preprint arXiv:2412.00779}, 2024.

\bibitem{FP13}
P.~M.~N. Feehan and C.~A. Pop.
\newblock A {S}chauder approach to degenerate-parabolic partial differential
  equations with unbounded coefficients.
\newblock {\em J. Differential Equations}, 254(12):4401--4445, 2013.

\bibitem{FP14}
P.~M.~N. Feehan and C.~A. Pop.
\newblock Schauder a priori estimates and regularity of solutions to
  boundary-degenerate elliptic linear second-order partial differential
  equations.
\newblock {\em J. Differential Equations}, 256(3):895--956, 2014.

\bibitem{JROX24}
T.~Jin, X.~Ros-Oton, and J.~Xiong.
\newblock Optimal regularity and fine asymptotics for the porous medium
  equation in bounded domains.
\newblock {\em J. Reine Angew. Math.}, 809:269--300, 2024.

\bibitem{KLR13}
L.~Kim, K.-a. Lee, and E.~Rhee.
\newblock {$\alpha$}-{G}auss curvature flows with flat sides.
\newblock {\em J. Differential Equations}, 254(3):1172--1192, 2013.

\bibitem{KL09}
S.~Kim and K.-A. Lee.
\newblock Smooth solution for the porous medium equation in a bounded domain.
\newblock {\em J. Differential Equations}, 247(4):1064--1095, 2009.

\bibitem{KLY25}
T.~Kim, K.-A. Lee, and H.~Yun.
\newblock Regularity theory for the porous medium equation in bounded domains.
\newblock {\em in preparation}.

\bibitem{KLY23}
T.~Kim, K.-A. Lee, and H.~Yun.
\newblock Higher regularity up to boundary for degenerate parabolic equations.
\newblock {\em J. Differential Equations}, 348:223--277, 2023.

\bibitem{KLY24}
T.~Kim, K.-A. Lee, and H.~Yun.
\newblock Generalized {S}chauder theory and its application to
  degenerate/singular parabolic equations.
\newblock {\em Math. Ann.}, 390(4):6049--6109, 2024.

\bibitem{LS17}
N.~Q. Le and O.~Savin.
\newblock Schauder estimates for degenerate {M}onge-{A}mp\`ere equations and
  smoothness of the eigenfunctions.
\newblock {\em Invent. Math.}, 207(1):389--423, 2017.

\bibitem{LLY24}
K.-A. Lee, S.-C. Lee, and H.~Yun.
\newblock {$C^{1,\alpha}$}-regularity for solutions of degenerate/singular
  fully nonlinear parabolic equations.
\newblock {\em J. Math. Pures Appl. (9)}, 181:152--189, 2024.

\bibitem{LY25}
K.-A. Lee and H.~Yun.
\newblock Boundary regularity for viscosity solutions of fully nonlinear
  degenerate/singular parabolic equations.
\newblock {\em Calc. Var. Partial Differential Equations}, 64(1):Paper No. 25,
  32, 2025.

\bibitem{LLYZ25}
S.-C. Lee, Y.~Lian, H.~Yun, and K.~Zhang.
\newblock Time derivative estimates for parabolic $p$-laplace equations and
  applications to optimal regularity, 2025.

\bibitem{Lie96}
G.~M. Lieberman.
\newblock {\em Second order parabolic differential equations}.
\newblock World Scientific Publishing Co., Inc., River Edge, NJ, 1996.

\bibitem{SW12}
Q.~Song and L.~Wang.
\newblock H\"older estimates for elliptic equations degenerate on part of the
  boundary of a domain.
\newblock {\em Manuscripta Math.}, 139(1-2):179--200, 2012.

\bibitem{Yun24}
H.~Yun.
\newblock Regularity theory for fully nonlinear equations of porous
  medium-type, 2024.

\end{thebibliography}

\end{document}